\documentclass{article} 
\usepackage{amsthm,amsmath,amsfonts,amssymb, hyperref,cases,xcolor,authblk}
\usepackage[capitalise]{cleveref}
\usepackage{vmargin}
\setmarginsrb{2cm}{1cm}{2cm}{3cm}{1cm}{1cm}{2cm}{2cm}

\numberwithin{equation}{section}
\newtheorem{Theorem}{Theorem}[section]

\newtheorem{Proposition}[Theorem]{Proposition}

\newtheorem{Remark}[Theorem]{Remark}

\renewcommand{\leq}{\leqslant}

\renewcommand{\geq}{\geqslant}

\renewcommand{\div}{\operatorname{div}}

\title{Asymptotic analysis of an optimal control problem for a viscous incompressible fluid with Navier slip boundary conditions}    
\author[1]{Claudia Gariboldi}
\author[2]{Tak\'eo Takahashi}
\affil[1]{Departamento de Matem\'atica, FCEFQyN,
Universidad Nacional de R\'io Cuarto, 

Ruta 36 Km 601, 5800 R\'io Cuarto, Argentina
}
\affil[2]{Universit\'e de Lorraine, CNRS, Inria, IECL, F-54000 Nancy, France}

\date{\today}

\begin{document}                  

\maketitle    

\abstract{
We consider an optimal control problem for the Navier-Stokes system with Navier slip boundary conditions. 
We denote by $\alpha$ the friction coefficient and
we analyze the asymptotic behavior of such a problem as $\alpha\to \infty$.
More precisely, we prove that if we take an optimal control for each $\alpha$, 
then there exists a sequence of optimal controls converging to an optimal control of the same optimal
control problem for the Navier-Stokes system with the Dirichlet boundary condition.
We also show the convergence of the corresponding direct and adjoint states.
}

\vspace{1cm}

\noindent {\bf Keywords:} optimal control, Navier-Stokes system, Navier slip boundary condition

\noindent {\bf 2010 Mathematics Subject Classification.}  49J20, 49K20, 35Q30, 76D05, 76D55

\section{Introduction}
In this article, we study an optimal control problem associated with the Navier-Stokes system. This classical system is a standard model for the motion of a viscous incompressible fluid. It is also usual to assume that the fluid adheres to the exterior boundary and to  consider thus the no-slip boundary condition. 
Nevertheless, in some physical situations, one can also consider the Navier slip boundary condition introduced by Navier in \cite{navier1823}, see for instance  \cite{Jager}, \cite{Kistler}, \cite{MR2123407}, \cite{Malek}, etc. 
Recently, several studies have been done in the case of fluid-structure interaction systems and in particular in \cite{MR3281946}, the authors show that with this boundary condition, one can recover the collisions between rigid bodies that are absent with the Dirichlet boundary condition (see \cite{Hillairet}, \cite{HT}).
Finally, a rigorous derivation of this condition from the Boltzmann equation is done in \cite{CoronF}.
The Navier slip boundary condition allows the fluid to slip tangentially to the boundary and involves a friction coefficient associated with this motion. Formally, if this coefficient goes to infinity, one recover the classical no-slip boundary condition.

Our aim here is to compare the optimal control problems for these two boundary conditions and to prove an asymptotic property as the friction coefficient goes to infinity. 
In order to show such a convergence, one has to first consider an appropriate functional framework. One possible choice is to consider weak solutions since in that case one can prove existence of global solutions without restrictions on the size of data and of the controls. However, in that case, the uniqueness of solutions is an open problem and the optimal control problems can not be stated properly if we do not know which state has to be used for the criterion.

Consequently, we work here with strong solutions and we thus restrict the size of the data to get existence and uniqueness of our solutions in $(0,T)$ where $T>0$ is given.
In order to start our asymptotic analysis, we however first need to show that this restriction is uniform with respect to the friction coefficient in the Navier boundary condition.

Another difficulty comes from the fact that since the system of Navier-Stokes is nonlinear, the optimal controls are not unique. Our main result thus states that, given a family of optimal controls, one can extract a subsequence converging towards an optimal control of the Navier-Stokes system with Dirichlet boundary conditions.

Let us give our precise notation and hypotheses:
first we consider $\Omega\subset \mathbb{R}^3$ a bounded domain of class $C^{2,1}$ and we write
the Navier-Stokes system with Dirichlet boundary conditions:
\begin{equation}\label{ns0.0}
\left\{
\begin{array}{rl}
\partial_t u +(u\cdot \nabla) u- \div \sigma(u,p)= f 1_{\omega} & \text{in} \ (0,T)\times \Omega,\\
\div u = 0 & \text{in} \ (0,T)\times \Omega,\\
u=b & \text{on} \ (0,T)\times \partial \Omega,\\
u(0,\cdot)=a & \text{in} \ \Omega.
\end{array}
\right.
\end{equation}
In the above system, $u$ and $p$ are respectively the fluid velocity and the pressure of the fluid. The functions $a$ and $b$ are respectively the initial and the boundary conditions that are fixed in this work. The control $f$ is acting in the open non empty set $\omega\Subset \Omega$, and 
the optimal control problem we consider is
\begin{equation}\label{ns0.1}
J(\overline{f})=\inf_{f\in \mathcal{U}} J(f),
\end{equation}
where
\begin{equation}\label{ns0.2}
J(f):=\frac 12 \int_0^T\int_{\Omega} |u_f-z_d|^2 \ dx\ dt+\frac{M}{2} \int_0^T\int_\omega |f|^2 \ dx\ dt.
\end{equation}
Here
$$
M>0, \quad z_d\in L^2(0,T;L^2(\Omega))
$$
and $\mathcal{U}$ is a subset of $L^2(0,T;L^2(\omega))$. The choice of $\mathcal{U}$ and of the data $(a,b)$ has to be done in such a way that the system \eqref{ns0.0} admits a unique solution $(u_f,p_f)$ so that the functional $J$ is well-defined.

We assume in particular
\begin{equation}\label{cc1}
a\in H^1(\Omega), \quad \div a=0, \quad a=b(0,\cdot) \quad \text{on} \ \partial \Omega
\end{equation}
and
\begin{equation}\label{cc2}
b\in L^2(0,T;H^{3/2}(\partial \Omega))\cap H^{1/4}(0,T;L^2(\partial \Omega)), \quad
\int_{\partial \Omega} b(t,\cdot)\cdot \nu \ d\gamma=0 \quad (t\in [0,T]).
\end{equation}

In \eqref{ns0.0}, we have denoted by $\sigma(u,p)$ the Cauchy stress tensor:
$$
\sigma(u,p):=2\mu D(u)-pI_3, \quad D(u):=\frac 12 \left(\nabla u +(\nabla u)^\top\right).
$$

Let us now consider the corresponding optimal control problem when we replace the Dirichlet boundary condition in \eqref{ns0.0}. 
by the Navier slip boundary condition. In that case our system writes
\begin{equation}\label{ns1.0}
\left\{
\begin{array}{rl}
\partial_t u +(u\cdot \nabla) u- \div \sigma(u,p)= f 1_{\omega} & \text{in} \ (0,T)\times \Omega,\\
\div u = 0 & \text{in} \ (0,T)\times \Omega,\\
u\cdot \nu=b\cdot \nu & \text{on} \ (0,T)\times \partial \Omega,\\
\left[2\mu D(u)\nu+\alpha (u-b)\right]_\tau = 0 & \text{on} \ (0,T)\times \partial \Omega,\\
u(0,\cdot)=a & \text{in} \ \Omega.
\end{array}
\right.
\end{equation}
We have denoted by $\nu$ the unit normal vector exterior to $\partial \Omega$ and by $w_\tau$ the tangential component of a vector $w\in \mathbb{R}^3$:
$$
w_\tau:=w-(w\cdot \nu)\nu = \nu\times (w\times \nu).
$$
In the second boundary condition of the above system, the parameter $\alpha\geq 0$ is the coefficient of friction of the Navier boundary condition.

For the above system, we can also consider the optimal control problem
\begin{equation}\label{nh1.1}
J_\alpha(\overline{f_\alpha})=\inf_{f\in \mathcal{U}} J_\alpha(f) \quad (\alpha\geq 0),
\end{equation}
where
\begin{equation}\label{nh1.2}
J_\alpha(f)=\frac 12 \int_0^T\int_{\Omega} |u_{\alpha,f}-z_d|^2 \ dx\ dt+\frac{M}{2} \int_0^T\int_\omega |f|^2 \ dx\ dt \quad (\alpha\geq 0).
\end{equation}
As in the first case, one have to choose the data and the set $\mathcal{U}$ in such a way that the systems \eqref{ns1.0} admits a unique solution
$(u_{\alpha,f},p_{\alpha,f})$ for any $f\in \mathcal{U}$ and for any $\alpha$ large enough.

In \cref{sec_pre}, we show that we can take $\mathcal{U}$ as a small ball of $L^2(0,T;L^2(\omega))$ independently of $\alpha$.
This allows us to then study the asymptotic behavior of the optimal control problems as the coefficient of friction $\alpha$ goes to infinity.

In what follows, we write
\begin{equation}\label{1448}
V_0^1:=\left\{u\in H^1(\Omega) \ ; \ \div u=0, \quad u=0 \ \text{on} \ \partial \Omega\right\},
\end{equation}
$$
V_0^{-1}:=(V_0^1)'.
$$

We are now in position to state our main result:
\begin{Theorem}\label{Thmain}
Assume that $(a,b)$ and $\mathcal{U}$ satisfy the above hypotheses and are such that for any $f\in \mathcal{U}$, the systems \eqref{ns0.0}
and \eqref{ns1.0} are well-posed on $[0,T]$ with the properties \eqref{ns7.0ter}, \eqref{ns7.0bis}.

Then for any $\alpha$ large enough, the problem \eqref{nh1.1} admits a solution $\overline{f_\alpha}$ and
there exist $\overline{f}\in \mathcal{U}$ and 
a sequence such that as $\alpha \to \infty$
\begin{equation}\label{ns5.2}
\overline{f_\alpha} \to \overline{f} \quad \text{strongly in} \quad L^2(0,T;L^2(\omega)),
\end{equation}
and $\overline{f}$ is a solution of \eqref{ns0.1}.
Moreover, the corresponding solutions $(u_{\overline{f}},p_{\overline{f}})$ and $(u_{\alpha,\overline{f_\alpha}},p_{\alpha,\overline{f_\alpha}})$
of \eqref{ns0.0} and \eqref{ns1.0}
satisfy
\begin{equation}\label{ns5.3}
u_{\alpha,\overline{f_\alpha}} \rightharpoonup
 u_{\overline{f}} \quad \text{weakly * in}\quad L^2(0,T;H^1(\Omega))\cap L^\infty(0,T;L^2(\Omega)),
\end{equation}
\begin{equation}\label{ns5.4}
\partial_t u_{\alpha,\overline{f_\alpha}} \rightharpoonup
\partial_t u_{\overline{f}} \quad \text{weakly in}\quad L^{4/3}(0,T;V_0^{-1}),
\end{equation}
\begin{equation}\label{ns5.5}
u_{\alpha,\overline{f_\alpha}} \to
 u_{\overline{f}} \quad \text{strongly in}\quad L^2(0,T;L^2(\Omega)),
\end{equation}
\begin{equation}\label{ns5.6}
\sqrt{\alpha} (u_{\alpha,\overline{f_\alpha}}-b) \to
 0 \quad \text{strongly in}\quad L^2(0,T;L^2(\partial \Omega)).
\end{equation}
The solutions of the adjoint systems (defined by \eqref{ns3.2} and \eqref{ns3.3}) $(\phi_{f},\pi_{f})$ and $(\phi_{\alpha,f},\pi_{\alpha,f})$
satisfy
\begin{equation}\label{ns5.7}
\phi_{\alpha,\overline{f_\alpha}} \rightharpoonup
 \phi_{\overline{f}} \quad \text{weakly * in}\quad L^2(0,T;H^1(\Omega))\cap L^\infty(0,T;L^2(\Omega)),
\end{equation}
\begin{equation}\label{ns5.8}
\partial_t \phi_{\alpha,\overline{f_\alpha}} \rightharpoonup
\partial_t \phi_{\overline{f}} \quad \text{weakly in}\quad L^{4/3}(0,T;V_0^{-1}),
\end{equation}
\begin{equation}\label{ns5.9}
\phi_{\alpha,\overline{f_\alpha}} \to
 \phi_{\overline{f}} \quad \text{strongly in}\quad L^2(0,T;L^2(\Omega)),
\end{equation}
\begin{equation}\label{ns5.6bis}
\sqrt{\alpha} \phi_{\alpha,\overline{f_\alpha}} \to
 0 \quad \text{strongly in}\quad L^2(0,T;L^2(\partial \Omega)).
\end{equation}
\end{Theorem}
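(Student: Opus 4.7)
The plan is to combine the direct method of the calculus of variations with a passage to the limit governed by the uniform-in-$\alpha$ estimates established in \cref{sec_pre}. Throughout, I would treat $\mathcal{U}$ as a closed convex (bounded) subset of $L^2(0,T;L^2(\omega))$, so that it is weakly compact.

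First I would settle the existence of $\overline{f_\alpha}$ for each fixed $\alpha$ large enough. Take a minimizing sequence $(f_n)\subset \mathcal{U}$ for $J_\alpha$; by weak compactness, $f_n\rightharpoonup \overline{f_\alpha}\in \mathcal{U}$ (up to extraction). Using the uniform well-posedness \eqref{ns7.0ter}, \eqref{ns7.0bis}, the corresponding states $u_{\alpha,f_n}$ are bounded in the strong-solution spaces, hence (via Aubin--Lions) converge strongly in $L^2(0,T;L^2(\Omega))$ to $u_{\alpha,\overline{f_\alpha}}$, so that $J_\alpha$ is weakly lower semicontinuous on $\mathcal{U}$ and attains its infimum.

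Next I would carry out the asymptotic analysis. Since $\mathcal{U}$ is fixed and bounded in $\alpha$, and since $J_\alpha(\overline{f_\alpha})\leq J_\alpha(0)$ is uniformly bounded, the family $\overline{f_\alpha}$ is bounded in $L^2(0,T;L^2(\omega))$, and the assumed uniform strong-solution estimates give uniform bounds on $u_{\alpha,\overline{f_\alpha}}$ in $L^\infty(0,T;L^2(\Omega))\cap L^2(0,T;H^1(\Omega))$, on $\partial_t u_{\alpha,\overline{f_\alpha}}$ in $L^{4/3}(0,T;V_0^{-1})$, and crucially on $\sqrt{\alpha}\,(u_{\alpha,\overline{f_\alpha}}-b)_\tau$ in $L^2(0,T;L^2(\partial\Omega))$ via the Navier boundary condition tested against $u_\alpha-b$. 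Extracting a subsequence, I obtain the weak convergences \eqref{ns5.3}, \eqref{ns5.4}, strong convergence \eqref{ns5.5} by Aubin--Lions, and the trace estimate \eqref{ns5.6} (using also $u_\alpha\cdot\nu=b\cdot\nu$). The key point is then to pass to the limit in the variational formulation of \eqref{ns1.0}: test functions in the limit belong to $V_0^1$, on which the boundary terms disappear, and \eqref{ns5.6} forces the trace $u_{\overline{f}}=b$ on $\partial\Omega$. Hence $(u_{\overline{f}},p_{\overline{f}})$ solves \eqref{ns0.0} associated with $\overline{f}$.

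To conclude the optimality and strong convergence of $\overline{f_\alpha}$, I would use the optimality inequality $J_\alpha(\overline{f_\alpha})\leq J_\alpha(g)$ for every $g\in\mathcal{U}$. Applied with $g=\overline{f}$, together with the strong convergence $u_{\alpha,\overline{f}}\to u_{\overline{f}}$ in $L^2(0,T;L^2(\Omega))$ (obtained by repeating the same passage to the limit for a fixed control), this yields
\[
\limsup_{\alpha\to\infty} \|\overline{f_\alpha}\|_{L^2}^2 \leq \|\overline{f}\|_{L^2}^2,
\]
which combined with weak lower semicontinuity gives the strong convergence \eqref{ns5.2}. Then, for arbitrary $g\in\mathcal{U}$, passing to the liminf on the left and the limit on the right of $J_\alpha(\overline{f_\alpha})\leq J_\alpha(g)$ shows $J(\overline{f})\leq J(g)$, so $\overline{f}$ solves \eqref{ns0.1}.

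The treatment of the adjoint states \eqref{ns5.7}--\eqref{ns5.6bis} proceeds along parallel lines: the adjoint system \eqref{ns3.3} is linear in $\phi_\alpha$ with source $u_{\alpha,\overline{f_\alpha}}-z_d$ bounded in $L^2(0,T;L^2(\Omega))$, so standard energy estimates for the Navier-type linear system (tested against $\phi_\alpha$, using $\phi_\alpha\cdot\nu=0$) produce uniform bounds on $\phi_\alpha$ in the same spaces and on $\sqrt\alpha\,\phi_\alpha|_{\partial\Omega}$ in $L^2(0,T;L^2(\partial\Omega))$. The previously established strong convergence of the direct states passes the source through, and the limit $\phi_{\overline{f}}$ solves the Dirichlet adjoint \eqref{ns3.2}. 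I expect the main obstacle to be the boundary argument—controlling the Navier boundary penalty $\sqrt\alpha(u_\alpha-b)_\tau$ uniformly in $\alpha$ and rigorously identifying the Dirichlet trace of the limit—since this is what couples the asymptotic regime to the change of boundary condition; the rest reduces to a careful but essentially standard $\Gamma$-convergence/direct-method argument.
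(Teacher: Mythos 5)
Your proposal is correct and follows essentially the same route as the paper: direct method for existence of $\overline{f_\alpha}$, uniform-in-$\alpha$ energy estimates plus Aubin--Lions and the $\sqrt{\alpha}$ boundary penalty to identify the Dirichlet limit, and the $J_\alpha(\overline{f_\alpha})\leq J_\alpha(\overline{f})$ comparison to upgrade weak to strong convergence of the controls and conclude optimality. The only points the paper makes explicit that you gloss over are the use of weak--strong uniqueness to identify the limit with the strong solution $u_{\overline{f}}$ (and likewise for the adjoint), and that the comparison control should be $\widehat{f}\in\mathcal{U}$ rather than $0$, which need not belong to $\mathcal{U}$.
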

The interest of the adjoint systems with respect to problems \eqref{ns0.1} and \eqref{nh1.1} is given in \cref{T01}: they are associated with the first order condition of the optimal control problems. 

The result given in \cref{Thmain} is in the same spirit as previous results obtained for other partial differential equations: 
\cite{MR1974387}, \cite{MR2569618}, \cite{MR2477059}, \cite{MR3410698},   
in the case of elliptic problems and \cite{MR3083990}, \cite{MenaldiTarzia},  in the case of parabolic systems.

The outline of the paper is as follows: in \cref{sec_pre}, we show that the hypotheses of \cref{Thmain} can be satisfied for $(a,b)$ small enough and $\mathcal{U}$ as a small ball of $L^2(0,T;L^2(\omega))$. Then \cref{sec_asy} is devoted to results of convergence as $\alpha \to \infty$ of the solutions of \eqref{ns1.0} and of the adjoint systems. These results allow us to reduce the proof of the main result to the convergence of the family of the optimal controls. In \cref{sec_exi}, we show that 
 \eqref{ns0.1} and \eqref{nh1.1} admit at least a solution and we give the first order condition in terms of the adjoint systems  \eqref{ns3.2} and \eqref{ns3.3}.
 Finally in \cref{sec_main} we gather the previous results and prove \cref{Thmain}.
In \cref{sec2d}, we present the bidimensional case, where the hypotheses on the data are weaker.

\section{Uniform well-posedness of the Navier-Stokes systems}\label{sec_pre}
With the hypotheses of the introduction, in particular \eqref{cc1} and \eqref{cc2}, 
classical results yield the existence and uniqueness of strong solutions for the system \eqref{ns0.0}
and for the system \eqref{ns1.0} for data small enough (that is $a$, $b$ and $f$). 
Nevertheless, here we have to take care that the smallness conditions do not depend on $\alpha$ and we thus need to derive standard a priori estimates for the system \eqref{ns1.0} to show the uniformity of our conditions.
To simplify, we take $\alpha$ large enough and in particular satisfying 
\begin{equation}\label{1556}
\alpha>\| b\|_{L^\infty(0,T;L^\infty(\partial \Omega))}+1.
\end{equation}

Let us assume that for some $\widehat{f}\in L^2(0,T;L^2(\omega))$ there exists a unique smooth solution $(\widehat{u},\widehat{p})$ of \eqref{ns0.0} on $[0,T]$. We show that in a neighborhood of $\widehat{f}$, independent of $\alpha$, the systems \eqref{ns0.0}
and \eqref{ns1.0} are well-posed on $[0,T]$. 

\begin{Proposition}\label{P01}
Assume $T>0$ and that \eqref{ns0.0} admits a strong solution $(\widehat{u},\widehat{p})$ with 
$$
\widehat{u}\in H^1(0,T;H^2(\Omega)).
$$
There exists a constant $\widehat{C}$ independent of $\alpha$ such that if 
\begin{equation}\label{ns1.4}
\|\widehat{u}\|_{H^1(0,T;H^2(\Omega))}\leq \widehat{C},
\end{equation}
then there exists $\varepsilon$ such that for any $f\in L^2(0,T;L^2(\omega))$,
$$
\|f-\widehat{f}\|_{L^2(0,T;L^2(\omega))}\leq \varepsilon,
$$
the system \eqref{ns1.0} admits a unique strong solution 
\begin{equation}\label{ns7.0ter}
(u_{\alpha,f},p_{\alpha,f})\in \left[L^2(0,T;H^2(\Omega))\cap C^0([0,T];H^1(\Omega)) \cap H^1(0,T;L^2(\Omega)) \right]\times L^2(0,T;H^1(\Omega)/\mathbb{R}).
\end{equation}
Moreover there exists a constant $C$ independent of $\alpha$ such that 
\begin{equation}\label{ns7.0bis}
\|u_{\alpha,f}\|_{L^\infty(0,T;H^1(\Omega))}
+
\|\mathbb{P}\Delta u_{\alpha,f}\|_{L^2(0,T;L^2(\Omega))}
+
\|u_{\alpha,f}\|_{L^2(0,T;W^{1,6}(\Omega))}
\leq C.
\end{equation}
\end{Proposition}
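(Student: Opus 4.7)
The plan is to set up the problem as a perturbation of the given strong Dirichlet solution $(\widehat{u},\widehat{p})$ and to obtain a priori estimates that are uniform in $\alpha$, then deduce existence and uniqueness by a contraction or Galerkin argument on the linearized system.

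First I would write $v := u_{\alpha,f}-\widehat{u}$ and $q := p_{\alpha,f}-\widehat{p}$. Since $\widehat{u}=b$ on $\partial\Omega$ and $u_{\alpha,f}\cdot\nu=b\cdot\nu$, the perturbation $v$ satisfies $v\cdot\nu=0$ on $\partial\Omega$, so the natural space $V_0^1$-type framework adapted to the Navier condition applies to $v$. The equation for $v$ reads
\begin{equation*}
\partial_t v-\div\sigma(v,q)+(v\cdot\nabla)v+(v\cdot\nabla)\widehat{u}+(\widehat{u}\cdot\nabla)v=(f-\widehat{f})1_\omega,\qquad \div v=0,
\end{equation*}
with the tangential Navier condition rewritten as $[2\mu D(v)\nu+\alpha v]_\tau=-[2\mu D(\widehat{u})\nu]_\tau$ on $\partial\Omega$. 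The right-hand side in the source is small in $L^2(0,T;L^2(\omega))$, and the boundary forcing $[2\mu D(\widehat{u})\nu]_\tau$ is controlled by $\|\widehat{u}\|_{H^1(0,T;H^2(\Omega))}$ via the trace theorem.

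Next I would run two energy estimates. Testing the $v$-equation against $v$ and integrating by parts against the stress gives
\begin{equation*}
\tfrac12\tfrac{d}{dt}\|v\|_{L^2}^2+2\mu\|D(v)\|_{L^2}^2+\alpha\|v_\tau\|_{L^2(\partial\Omega)}^2=-\int_{\partial\Omega}[2\mu D(\widehat{u})\nu]_\tau\cdot v_\tau+\text{(source+convective terms)}.
\end{equation*}
Because $v\cdot\nu=0$, a Korn inequality adapted to Navier-type fields yields $\|v\|_{H^1}\lesssim\|D(v)\|_{L^2}+\|v\|_{L^2}$, and the boundary forcing is absorbed using $2ab\le\alpha a^2+\alpha^{-1}b^2$, with the $\alpha^{-1}$ factor making it small (this uses \eqref{1556}). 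Then I would test against $\mathbb{P}\Delta v$ (the Stokes operator compatible with the Navier slip condition) to obtain the $L^\infty H^1\cap L^2 H^2$ bound; the standard Stokes regularity adapted to Navier boundary conditions (for which I would cite or re-derive the relevant result) gives $\|v\|_{H^2}\lesssim\|\mathbb{P}\Delta v\|_{L^2}+\text{l.o.t.}$, with constants that do not blow up as $\alpha\to\infty$, the key point being that the boundary integrals produced by integration by parts are controlled either by the dissipative $\alpha\|v_\tau\|^2$ term or by the smallness of $\widehat{u}$. The nonlinear trilinear terms $(v\cdot\nabla)v$, $(v\cdot\nabla)\widehat{u}$, $(\widehat{u}\cdot\nabla)v$ are handled by the usual interpolation $\|v\|_{L^\infty L^2}^{1/2}\|v\|_{L^2 H^2}^{1/2}$ type bounds and Sobolev embeddings in $\mathbb{R}^3$, and absorbed using smallness of $\|\widehat{u}\|_{H^1(0,T;H^2)}$ and of $\|f-\widehat{f}\|_{L^2L^2}$.

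Finally, once a priori estimates of the form $\|v\|_{L^\infty H^1}+\|\mathbb{P}\Delta v\|_{L^2 L^2}\le C(\varepsilon+\widehat{C})$ with $C$ independent of $\alpha$ are established, I would build the solution by a Banach fixed-point argument on the map $w\mapsto v$ that solves the linearized problem with $(w\cdot\nabla)w$ frozen on the right-hand side, working in a small ball of $L^\infty(0,T;H^1)\cap L^2(0,T;H^2)$. Uniqueness follows from the same energy estimate applied to the difference of two solutions. The Sobolev embedding $H^2(\Omega)\hookrightarrow W^{1,6}(\Omega)$ then yields the $L^2(0,T;W^{1,6})$ part of \eqref{ns7.0bis}, and adding back $\widehat{u}$ gives the statement for $u_{\alpha,f}$.

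The main obstacle I expect is verifying uniformity in $\alpha$ of the constant in the Stokes regularity estimate for the Navier slip Stokes operator, and correctly handling the boundary terms generated when integrating by parts against $\mathbb{P}\Delta v$: the terms involving $\alpha v_\tau$ on $\partial\Omega$ must combine with the tangential Navier condition in such a way that either they cancel or they give a non-negative contribution, and the residual term involving $D(\widehat{u})$ on the boundary must be absorbable via the smallness \eqref{ns1.4}. A secondary technical point is ensuring the compatibility of the $H^2$ regularity with the non-homogeneous boundary data $b$, which can be addressed by first lifting $b$ by a divergence-free extension so that the perturbation $v$ lives in a homogeneous Navier space.
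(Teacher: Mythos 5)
Your overall strategy coincides with the paper's: write $w=u_{\alpha,f}-\widehat u$, derive an $L^2$ energy estimate by testing with $w$ (using the $\alpha$-dissipation on the boundary and \eqref{1556}), then a higher-order estimate by testing with $-\mu\mathbb{P}\Delta w$, and close everything by smallness of $\widehat u$ and of $f-\widehat f$. The paper also does not need your divergence-free lifting of $b$, since $\widehat u$ itself plays that role, and it obtains existence by local well-posedness plus the continuation criterion (the solution persists as long as $\|u(t,\cdot)\|_{H^1}$ stays bounded) rather than by a fixed point; these are cosmetic differences.

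The genuine gap is the regularity estimate you invoke to handle the trilinear terms. You assume a Stokes estimate of the form $\|v\|_{H^2}\lesssim\|\mathbb{P}\Delta v\|_{L^2}+\text{l.o.t.}$ for the Navier-slip Stokes operator \emph{with constant uniform in $\alpha$}, and you yourself flag its verification as "the main obstacle" without resolving it. This is precisely the point the paper is careful to avoid: it never claims a uniform $H^2$ bound (note that \eqref{ns7.0bis} only asserts uniform control of $\|\mathbb{P}\Delta u_{\alpha,f}\|_{L^2L^2}$ and of the $W^{1,6}$ norm, not of the $H^2$ norm). Instead, the paper applies Theorem 2.2 of \cite{AACG} to the stationary system \eqref{ns0.3} to get the uniform-in-$\alpha$ estimate \eqref{ns0.4}, $\|w\|_{W^{1,6}(\Omega)}\leq C(\|\mu\mathbb{P}\Delta w\|_{L^2(\Omega)}+\|\widehat u\|_{H^2(\Omega)})$, and this weaker estimate is enough to bound $(w\cdot\nabla)w$ etc.\ via $\|\nabla w\|_{L^2}^{1/2}\|\nabla w\|_{L^6}^{1/2}$ and to produce the $W^{1,6}$ part of \eqref{ns7.0bis}. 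Without replacing your unproven uniform $H^2$ estimate by such a uniform $W^{1,6}$ (or otherwise $\alpha$-independent) regularity result, the argument does not close. A secondary point you gloss over: the second energy estimate yields a term $\int_0^t\|w\|_{H^1}^6\,ds$ on the right-hand side, so the conclusion requires a nonlinear Gr\"onwall/bootstrap under an explicit smallness condition (the paper's \eqref{ns1.3}) to guarantee that $\|w(t,\cdot)\|_{H^1}$ stays bounded up to time $T$; "absorbed using smallness" needs to be made precise at this step, since this is exactly where the constants $\widehat C$ and $\varepsilon$ of the statement are determined.
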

\begin{proof}
By using standard results (see, for instance \cite{GrubbSolonnikov}), there exists a unique local strong solution 
$$
(u,p)=(u_{\alpha,f},p_{\alpha,f})\in \left[L^2(0,T_{loc};H^2(\Omega))\cap C^0([0,T_{loc}];H^1(\Omega)) \cap H^1(0,T_{loc};L^2(\Omega)) \right]\times L^2(0,T_{loc};H^1(\Omega)/\mathbb{R})
$$ 
of the system \eqref{ns1.0} and it exists as long as the $H^1(\Omega)$-norm of $u(t,\cdot)$ remains bounded. 
We thus only need to estimate the $H^1(\Omega)$-norm of $u(t,\cdot)$. 
We consider 
$$
w:=u-\widehat{u}, \quad q:=p-\widehat{p}, \quad g:=f-\widehat{f}
$$
that satisfy
\begin{equation}\label{ns2.0}
\left\{
\begin{array}{rl}
\partial_t w +(w\cdot \nabla) w
+(\widehat{u}\cdot \nabla) w
+(w\cdot \nabla) \widehat{u}
- \div \sigma(w,q)
= g 1_{\omega} & \text{in} \ (0,T)\times \Omega,\\
\div w = 0 & \text{in} \ (0,T)\times \Omega,\\
w\cdot \nu=0 & \text{on} \ (0,T)\times \partial \Omega,\\
\left[2\mu D(w)\nu+\alpha w\right]_\tau = -\left[2\mu D(\widehat{u})\nu\right]_\tau  & \text{on} \ (0,T)\times \partial \Omega,\\
w(0,\cdot)=0 & \text{in} \ \Omega.
\end{array}
\right.
\end{equation}
To obtain our estimates, we multiply the first equation of \eqref{ns2.0} by $w$ and we deduce
\begin{multline}\label{ns4.1}
\frac{1}{2}\frac{d}{dt} \int_{\Omega} |w|^2 \ dx + \int_{\partial \Omega} \frac{b\cdot \nu}{2} w_\tau^2 \ d\gamma
+\int_{\Omega} [(w\cdot\nabla)\widehat{u}]\cdot w \ dx + 2\mu \int_{\Omega} |D(w)|^2 \ dx +\alpha \int_{\partial \Omega} w_\tau^2 \ d\gamma
\\
=\int_{\omega} g\cdot w \ dx -  2\mu \int_{\partial \Omega} \left[D(\widehat{u})\nu\right]_\tau \cdot w_\tau \ d\gamma.
\end{multline}

Using H\"older's inequality, the Sobolev embedding $H^1(\Omega)\subset L^6(\Omega)$ and the Korn inequality, we deduce
\begin{multline}\label{1558}
\left| \int_{\Omega} [(w\cdot\nabla)\widehat{u}]\cdot w \ dx\right|
\leq \|w\|_{L^6(\Omega)}\|\nabla \widehat{u}\|_{L^3(\Omega)} \|w\|_{L^2(\Omega)}
\leq C \|w\|_{H^1(\Omega)}\|\widehat{u}\|_{H^1(\Omega)}^{1/2} \|\widehat{u}\|_{H^2(\Omega)}^{1/2} \|w\|_{L^2(\Omega)}
\\
\leq \mu \|D(w)\|_{L^2(\Omega)}^2+
C(1+\|\widehat{u}\|_{H^1(\Omega)} \|\widehat{u}\|_{H^2(\Omega)}) \|w\|_{L^2(\Omega)}^2.
\end{multline}

Using \eqref{1556},
we deduce from \eqref{ns4.1} and from \eqref{1558} that
\begin{multline}\label{ns1.2}
\int_{\Omega} |w(t,\cdot)|^2 \ dx 
+ 2\mu \int_0^t \int_{\Omega} |D(w)|^2 \ dx \ ds
+\alpha \int_0^t \int_{\partial \Omega} w_\tau^2 \ d\gamma \ ds
\\
\leq 
C\left(\|g\|_{L^2(0,T;L^2(\omega))}^2 +\|\widehat{u}\|_{L^2(0,T;H^2(\Omega))}^2\right)
+C(1+\|\widehat{u}\|_{L^\infty(0,T;H^2(\Omega))}^2)\int_0^t \int_{\Omega} |w|^2 \ dx \ ds,
\end{multline}
where the constants $C$ are independent of $\alpha$.

Using the Gr\"onwall lemma, we deduce 
\begin{multline}
\|w\|_{L^\infty(0,T;L^2(\Omega))}^2
+\|w\|_{L^2(0,T;H^1(\Omega))}^2
+\alpha \|w_\tau\|_{L^2(0,T;L^2(\partial \Omega))}^2
\\
\leq 
C\left(\|g\|_{L^2(0,T;L^2(\omega))}^2 +\|\widehat{u}\|_{L^2(0,T;H^2(\Omega))}^2\right)
\exp\left(C(1+\|\widehat{u}\|_{L^\infty(0,T;H^2(\Omega))}^2)T\right),
\end{multline}
where the constants $C$ are independent of $\alpha$.

Then, we multiply the first equation of \eqref{ns2.0} by $-\mathbb{P}\mu\Delta w$ where $\mathbb{P}$ is the Leray projector. We obtain after integration by parts
\begin{multline}\label{ns0.7}
\frac{d}{dt}\left(
\mu \int_{\Omega} |Dw|^2 \ dx
+\frac{\alpha}{2} \int_{\partial \Omega} |w_\tau|^2 \ d\gamma
+\int_{\partial \Omega} w_\tau\cdot \left[2\mu D(\widehat{u})\nu\right]_\tau \ d\gamma
\right)
-\int_{\partial \Omega} w_\tau\cdot \left[2\mu D(\partial_t\widehat{u})\nu\right]_\tau \ d\gamma
\\
+\int_{\Omega} (w\cdot \nabla) w \cdot (-\mu\mathbb{P}\Delta w) \ dx
+\int_{\Omega} (\widehat{u}\cdot \nabla) w \cdot (-\mu\mathbb{P}\Delta w) \ dx
+\int_{\Omega} (w\cdot \nabla) \widehat{u} \cdot (-\mu \mathbb{P}\Delta w) \ dx
\\
+ \int_{\Omega} |\mu \mathbb{P}\Delta w|^2 \ dx
= \int_{\omega} g \cdot (-\mu\mathbb{P}\Delta w) \ dx.
\end{multline}

Note that in $(0,T)$,
\begin{equation}\label{ns0.3}
\left\{
\begin{array}{rl}
-\mu \Delta w+\nabla Q=-\mu \mathbb{P}\Delta w & \text{in} \ \Omega,\\
\div w = 0 & \text{in} \ \Omega,\\
w\cdot \nu=0 & \text{on} \ \partial \Omega,\\
\left[2\mu D(w)\nu+\alpha w\right]_\tau = -\left[2\mu D(\widehat{u})\nu\right]_\tau  & \text{on} \ \partial \Omega,
\end{array}
\right.
\end{equation}
and thus from Theorem 2.2 in \cite{AACG} (see also \cite{AACG-Cras}),
\begin{equation}\label{ns0.4}
\|w\|_{W^{1,6}(\Omega)}\leq C \left( \|\mu \mathbb{P}\Delta w\|_{L^2(\Omega)}+\|\widehat{u}\|_{H^2(\Omega)}\right)
\end{equation}
where $C$ is independent of $\alpha$.
We deduce from \eqref{ns0.4}
\begin{multline}\label{ns0.5}
\left|\int_{\Omega} (w\cdot \nabla) w \cdot (-\mu\mathbb{P}\Delta w) \ dx\right|
\leq \|w\|_{L^6(\Omega)} \|\nabla w\|_{L^2(\Omega)}^{1/2} \|\nabla w\|_{L^6(\Omega)}^{1/2} \|\mu \mathbb{P}\Delta w\|_{L^2(\Omega)}
\\
\leq C \|w\|_{H^1(\Omega)}^{3/2} \left( \|\mu \mathbb{P}\Delta w\|_{L^2(\Omega)}^{3/2}
+\|\widehat{u}\|_{H^2(\Omega)}^{1/2} \|\mu \mathbb{P}\Delta w\|_{L^2(\Omega)}\right)
\\
\leq C\left(\|w\|_{H^1(\Omega)}^{6} + \|\widehat{u}\|_{H^2(\Omega)}^2 \right) +\frac{1}{8} \|\mu \mathbb{P}\Delta w\|_{L^2(\Omega)}^2
\end{multline}
and similarly,
\begin{multline}\label{ns0.6}
\left|
\int_{\Omega} (\widehat{u}\cdot \nabla) w \cdot (-\mu\mathbb{P}\Delta w) \ dx
+\int_{\Omega} (w\cdot \nabla) \widehat{u} \cdot (-\mu \mathbb{P}\Delta w) \ dx
\right| 
\\
\leq C\left(\|\widehat{u}\|_{H^1(\Omega)}^6 +\|w\|_{H^1(\Omega)}^{6} + \|\widehat{u}\|_{H^2(\Omega)}^2\right) 
+\frac{1}{8} \|\mu \mathbb{P}\Delta w\|_{L^2(\Omega)}^2.
\end{multline}

Combining \eqref{ns0.5} and \eqref{ns0.6} with \eqref{ns0.7}, we deduce
\begin{multline}\label{ns0.8bis}
\mu \int_{\Omega} |Dw(t,\cdot)|^2 \ dx
+\frac{\alpha}{2} \int_{\partial \Omega} |w_\tau(t,\cdot)|^2 \ d\gamma
+\int_{\partial \Omega} w_\tau(t,\cdot)\cdot \left[2\mu D(\widehat{u}(t,\cdot))\nu\right]_\tau \ d\gamma
\\
-\int_0^t \int_{\partial \Omega} w_\tau\cdot \left[2\mu D(\partial_t\widehat{u})\nu\right]_\tau \ d\gamma \ ds
+\frac{1}{2}\int_0^t \int_{\Omega} |\mu \mathbb{P}\Delta w|^2 \ dx
\\
\leq C\left(\|g\|_{L^2(0,T;L^2(\omega))}^2 +\|\widehat{u}\|_{L^\infty(0,T;H^1(\Omega))}^6 + \|\widehat{u}\|_{L^2(0,T;H^2(\Omega))}^2
+\int_0^t \|w(s,\cdot)\|_{H^1(\Omega)}^{6} \ ds \right).
\end{multline}
The above estimate combined with \eqref{ns1.2}, Korn's inequality and trace properties yields
\begin{multline}
\|w(t,\cdot)\|_{H^1(\Omega)}^{2}
+\frac{\alpha}{2} \int_{\partial \Omega} |w_\tau(t,\cdot)|^2 \ d\gamma
\\
+ 2\mu \int_0^t \int_{\Omega} |D(w)|^2 \ dx \ ds
+\frac{\alpha}{2}  \int_0^t \int_{\partial \Omega} w_\tau^2 \ d\gamma \ ds
+\frac{1}{2}\int_0^t \int_{\Omega} |\mu \mathbb{P}\Delta w|^2 \ dx\ ds
\\
\leq 
C_1\left(
\|g\|_{L^2(0,T;L^2(\omega))}^2 
+\|\widehat{u}\|_{L^\infty(0,T;H^1(\Omega))}^6 
+ \|\widehat{u}\|_{H^1(0,T;H^2(\Omega))}^2
\right)
\\
+C_2(1+\|\widehat{u}\|_{L^\infty(0,T;H^2(\Omega))}^2)\left(\int_0^t \|w(s,\cdot)\|_{L^2(\Omega)}^{2} \ ds
+\int_0^t \|w(s,\cdot)\|_{H^1(\Omega)}^{6} \ ds \right),
\end{multline}
where $C_1$, $C_2$ are independent of $\alpha$.

Using Gr\"onwall's lemma, we deduce that if
\begin{multline}\label{ns1.3}
C_1\left(
\|g\|_{L^2(0,T;L^2(\omega))}^2 
+\|\widehat{u}\|_{L^\infty(0,T;H^1(\Omega))}^6 
+ \|\widehat{u}\|_{H^1(0,T;H^2(\Omega))}^2
\right)
\exp\left(C_2(1+\|\widehat{u}\|_{L^\infty(0,T;H^2(\Omega))}^2)T\right)
\\
< \sqrt{\frac{\ln(2)}{T}}
\end{multline}
then 
\begin{multline*}
\forall t\in [0,T], \quad \|w(t,\cdot)\|_{H^1(\Omega)}^{2}
\\
\leq 2C_1\left(
\|g\|_{L^2(0,T;L^2(\omega))}^2 
+\|\widehat{u}\|_{L^\infty(0,T;H^1(\Omega))}^6 
+ \|\widehat{u}\|_{H^1(0,T;H^2(\Omega))}^2
\right)
\exp\left(C_2(1+\|\widehat{u}\|_{L^\infty(0,T;H^2(\Omega))}^2)t\right)
\end{multline*}
and thus remains bounded on $[0,T]$ by a constant independent of $\alpha$.
This concludes the proof.
\end{proof}

\begin{Remark}
With the conditions \eqref{cc1} and \eqref{cc2}, classical results (see for instance \cite{Temam}, \cite{ConstantinFoias}, \cite{GrubbSolonnikov0})
give the existence and uniqueness of 
a strong solution $(\widehat{u},\widehat{p})$ of \eqref{ns0.0} associated with $\widehat{f}$ provided that
$$
\|a\|_{H^1(\Omega)}+\|b\|_{L^2(0,T;H^{3/2}(\partial \Omega))\cap H^{1/4}(0,T;L^2(\partial \Omega))}
+\|\widehat{f}\|_{L^2(0,T;L^2(\omega))}
$$
is small enough. To obtain a stronger solution and the bound \eqref{ns1.4} one needs stronger hypotheses on $a$, $b$ and $\widehat{f}$
(see \cite{GrubbSolonnikov0}). To simplify the presentation, we only keep here the condition \eqref{ns1.4}  as the hypothesis.
\end{Remark}

In what follows
\begin{equation}\label{ns1.5}
\mathcal{U}:=\left\{f\in L^2(0,T;L^2(\omega)) \ ; \ \|f-\widehat{f}\|_{L^2(0,T;L^2(\omega))}\leq \varepsilon\right\},
\end{equation}
where $\varepsilon$ is given by \cref{P01}.

For any $f\in \mathcal{U}$, we denote by $(u_f,p_f)$ the strong solution of \eqref{ns0.0} on $(0,T)$ and by  
$(u_{\alpha,f},p_{\alpha,f})$ the strong solution of \eqref{ns1.0} on $(0,T)$.

\section{Adjoint systems and convergence as $\alpha\to \infty$}\label{sec_asy}
In this section, we define the adjoint systems for the optimal control problems \eqref{ns0.1} and \eqref{nh1.1} and we show convergences results for the direct state and the adjoint state as $\alpha\to \infty$.

First let us define the adjoint systems of \eqref{ns0.0} and of \eqref{ns1.0}:
\begin{equation}\label{ns3.2}
\left\{
\begin{array}{rl}
-\partial_t \phi +(\nabla u_f)^\top \phi -(u_f\cdot \nabla) \phi- \div \sigma(\phi,\pi)= (u_f-z_d) & \text{in} \ (0,T)\times \Omega,\\
\div \phi = 0 & \text{in} \ (0,T)\times \Omega,\\
\phi=0 & \text{on} \ (0,T)\times \partial \Omega,\\
\phi(T,\cdot)=0 & \text{in} \ \Omega,
\end{array}
\right.
\end{equation}
and
\begin{equation}\label{ns3.3}
\left\{
\begin{array}{rl}
-\partial_t \phi +(\nabla u_{\alpha,f})^\top \phi -(u_{\alpha,f}\cdot \nabla) \phi- \div \sigma(\phi,\pi)= (u_{\alpha,f}-z_d) & \text{in} \ (0,T)\times \Omega,\\
\div \phi = 0 & \text{in} \ (0,T)\times \Omega,\\
\phi\cdot \nu=0 & \text{on} \ (0,T)\times \partial \Omega,\\
\left[2\mu D(\phi)\nu+\alpha \phi+(b\cdot \nu) \phi\right]_\tau = 0 & \text{on} \ (0,T)\times \partial \Omega,\\
\phi(T,\cdot)=0 & \text{in} \ \Omega.
\end{array}
\right.
\end{equation}
We denote by $(\phi_{f},\pi_{f})$ and by $(\phi_{\alpha,f},\pi_{\alpha,f})$ the corresponding solutions.
These adjoint systems are related to the optimal control problems \eqref{ns0.1} and \eqref{nh1.1} (see \cref{T01} below). 
Before giving these relations, let us state and prove the following important result:
\begin{Proposition}\label{P02}
Assume \eqref{1556},
and that $f,f_\alpha\in \mathcal{U}$ with
$$
f_\alpha \rightharpoonup
 f \quad \text{weakly in}\quad L^2(0,T;L^2(\omega)) \quad \text{as} \ \alpha \to \infty.
$$
Then, the solutions 
$(u_f,p_f)$, $(u_{\alpha,f_\alpha},p_{\alpha,f_\alpha})$, $(\phi_{f},\pi_{f})$ and $(\phi_{\alpha,f},\pi_{\alpha,f})$
of respectively \eqref{ns0.0}, \eqref{ns1.0},  \eqref{ns3.2}, \eqref{ns3.3}, 
satisfy, as $\alpha \to \infty$
\begin{equation}\label{ns1.6}
u_{\alpha,f_\alpha} \rightharpoonup
 u_f \quad \text{weakly * in}\quad L^2(0,T;H^1(\Omega))\cap L^\infty(0,T;L^2(\Omega)),
\end{equation}
\begin{equation}\label{ns1.7}
\partial_t u_{\alpha,f_\alpha} \rightharpoonup
\partial_t u_f \quad \text{weakly in}\quad L^{4/3}(0,T;V_0^{-1}),
\end{equation}
\begin{equation}\label{ns1.9}
u_{\alpha,f_\alpha} \to
 u_f \quad \text{strongly in}\quad L^2(0,T;L^2(\Omega)),
\end{equation}
\begin{equation}\label{ns1.8}
\sqrt{\alpha} (u_{\alpha,f_\alpha}-b) \to
 0 \quad \text{strongly in}\quad L^2(0,T;L^2(\partial \Omega)),
\end{equation}
\begin{equation}\label{ns3.0}
\phi_{\alpha,f_\alpha} \rightharpoonup
 \phi_f \quad \text{weakly * in}\quad L^2(0,T;H^1(\Omega))\cap L^\infty(0,T;L^2(\Omega)),
\end{equation}
\begin{equation}\label{ns3.1}
\partial_t \phi_{\alpha,f_\alpha} \rightharpoonup
\partial_t \phi_f \quad \text{weakly in}\quad L^{4/3}(0,T;V_0^{-1}),
\end{equation}
\begin{equation}\label{ns3.4}
\phi_{\alpha,f_\alpha} \to
 \phi_f \quad \text{strongly in}\quad L^2(0,T;L^2(\Omega)),
\end{equation}
\begin{equation}\label{ns1.8bis}
\sqrt{\alpha} \phi_{\alpha,f_\alpha} \to
 0 \quad \text{strongly in}\quad L^2(0,T;L^2(\partial \Omega)).
\end{equation}
\end{Proposition}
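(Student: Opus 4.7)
The plan is to combine the uniform-in-$\alpha$ estimates of \cref{P01} with an Aubin-Lions compactness argument, and to pass to the limit in \eqref{ns1.0} and \eqref{ns3.3} using test functions in $V_0^1$ so that the Navier boundary contributions drop out.

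\textbf{Direct state.} \cref{P01} provides uniform bounds on $u_{\alpha,f_\alpha}$ in $L^\infty(0,T;H^1(\Omega))\cap L^2(0,T;W^{1,6}(\Omega))$ together with $\mathbb{P}\Delta u_{\alpha,f_\alpha}$ uniformly in $L^2(0,T;L^2(\Omega))$, hence (by elliptic regularity for the Stokes operator with Navier conditions, cf.\ \cite{AACG}) $u_{\alpha,f_\alpha}$ uniformly in $L^2(0,T;H^2(\Omega))$. Testing the momentum equation of \eqref{ns1.0} against an arbitrary $\psi \in V_0^1$ eliminates the pressure and the boundary term from $\div\sigma$, so no $\alpha$-dependent quantity appears; the standard three-dimensional estimate $\|(u\cdot\nabla)u\|_{V_0^{-1}}\leq C\|u\|_{L^2}^{1/2}\|u\|_{H^1}^{3/2}$ then bounds $\partial_t u_{\alpha,f_\alpha}$ uniformly in $L^{4/3}(0,T;V_0^{-1})$. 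Aubin-Lions extracts a subsequence realising the weak convergences \eqref{ns1.6}--\eqref{ns1.7} and the strong convergence \eqref{ns1.9} to some limit $u^*$. The weak form of \eqref{ns1.0} tested against $V_0^1$ coincides with the weak form of \eqref{ns0.0}, so the strong $L^2(L^2)$ convergence of $u_{\alpha,f_\alpha}$ and the weak convergence of $f_\alpha$ let us pass to the limit in every term, including the nonlinearity. For the boundary condition of $u^*$: the identity $u_{\alpha,f_\alpha}\cdot\nu=b\cdot\nu$ passes to the trace, while the Navier condition rewritten as $\alpha(u_{\alpha,f_\alpha}-b)_\tau=-[2\mu D(u_{\alpha,f_\alpha})\nu]_\tau$, together with the uniform $L^2(H^2)$ bound and the trace theorem, yields $\|\alpha(u_{\alpha,f_\alpha}-b)_\tau\|_{L^2(0,T;L^2(\partial\Omega))}\leq C$, hence $\|\sqrt{\alpha}(u_{\alpha,f_\alpha}-b)\|_{L^2(0,T;L^2(\partial\Omega))}\leq C/\sqrt{\alpha}\to 0$, which is \eqref{ns1.8} and forces $u^*=b$ on $\partial\Omega$. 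Uniqueness of the strong solution of \eqref{ns0.0} gives $u^*=u_f$, so the limit is unique and the whole sequence converges.

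\textbf{Adjoint state.} The same scheme applies to \eqref{ns3.3}, which is linear in $\phi$. First I reproduce the energy estimate and the $-\mathbb{P}\mu\Delta\phi$ estimate of \cref{P01} for \eqref{ns3.3}, using \eqref{1556} to absorb the extra $(b\cdot\nu)\phi$ contribution in the Navier boundary term and using the uniform bounds on $u_{\alpha,f_\alpha}$ already obtained to control the coefficients $(\nabla u_{\alpha,f_\alpha})^\top$, $(u_{\alpha,f_\alpha}\cdot\nabla)$ and the source $u_{\alpha,f_\alpha}-z_d$. This produces $\alpha$-independent bounds for $\phi_{\alpha,f_\alpha}$ in $L^\infty(H^1)\cap L^2(W^{1,6})$ with $\mathbb{P}\Delta\phi_{\alpha,f_\alpha}\in L^2(L^2)$, and for $\partial_t\phi_{\alpha,f_\alpha}$ in $L^{4/3}(V_0^{-1})$ (testing the adjoint equation against $V_0^1$ kills the boundary term as before). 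Aubin-Lions gives \eqref{ns3.0}--\eqref{ns3.4} along a subsequence, and passing to the limit in the weak formulation against $V_0^1$ test functions is now immediate because the coefficient $u_{\alpha,f_\alpha}$ converges strongly in $L^2(L^2)$ and the limit $u_f$ is the coefficient appearing in \eqref{ns3.2}. Uniqueness of the adjoint system identifies the limit as $\phi_f$; the convergence \eqref{ns1.8bis} follows from the identity $\alpha\phi_\tau=-[2\mu D(\phi)\nu+(b\cdot\nu)\phi]_\tau$ exactly as for the direct state.

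\textbf{Expected obstacle.} The delicate point is the $\alpha$-independent $H^2$/$W^{1,6}$ estimate for the adjoint: the boundary coupling $(b\cdot\nu)\phi$ in \eqref{ns3.3} must be absorbed by the $\alpha$-term, which is where the hypothesis \eqref{1556} is essential, and the nonlinear-in-$u_{\alpha,f_\alpha}$ coefficients must be closed via Korn, trace and Sobolev inequalities without producing a constant that blows up with $\alpha$. Once this uniform regularity of the adjoint is in hand, all the remaining convergences reduce to the routine passage to the limit outlined above.
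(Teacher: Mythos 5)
Your overall architecture (uniform bounds, a bound on the time derivative in $L^{4/3}(0,T;V_0^{-1})$ obtained by testing against $V_0^1$ so that all $\alpha$-dependent boundary terms drop, Aubin--Lions, passage to the limit in the weak formulation, identification of the limit by weak--strong uniqueness) is exactly the paper's. But there is a genuine gap in your mechanism for \eqref{ns1.8} and \eqref{ns1.8bis}. You invoke a uniform-in-$\alpha$ bound for $u_{\alpha,f_\alpha}$ in $L^2(0,T;H^2(\Omega))$ "by elliptic regularity for the Stokes operator with Navier conditions"; this is precisely what is \emph{not} available uniformly in $\alpha$. The only second-order information that \cref{P01} provides uniformly is $\|\mathbb{P}\Delta u_{\alpha,f_\alpha}\|_{L^2(L^2)}$ together with the $W^{1,6}$ bound of \eqref{ns0.4} (that is the whole point of citing Theorem 2.2 of \cite{AACG}); the $H^2$ constant for the Navier problem is not asserted to be $\alpha$-independent. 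Consequently your rewriting $\alpha(u_{\alpha,f_\alpha}-b)_\tau=-[2\mu D(u_{\alpha,f_\alpha})\nu]_\tau$ cannot be closed: a uniform $L^2(0,T;W^{1,6}(\Omega))$ bound gives $\nabla u_{\alpha,f_\alpha}\in L^6(\Omega)$ a.e.\ in $t$, which has no $L^2(\partial\Omega)$ trace, so you have no uniform control of $\|[D(u_{\alpha,f_\alpha})\nu]_\tau\|_{L^2(0,T;L^2(\partial\Omega))}$. The paper instead reads \eqref{ns1.8} off the $\alpha\int_0^t\int_{\partial\Omega}|w_\tau|^2\,d\gamma\,ds$ term already present on the left-hand side of the energy estimates of \cref{P01} (and \eqref{ns1.8bis} off the analogous term $\alpha\int_t^T\int_{\partial\Omega}|\phi_{\alpha,f_\alpha}|^2\,d\gamma$ in \eqref{ns6.3}); no boundary trace of $D(u)$ is ever taken.

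The second divergence concerns the adjoint. You propose to reproduce the full second-order ($-\mu\mathbb{P}\Delta\phi$) estimate for \eqref{ns3.3} and flag it as the delicate point. The paper does not do this and does not need to: for the adjoint only the energy estimate \eqref{ns6.1}--\eqref{ns6.3} is performed, yielding uniform bounds in $L^\infty(0,T;L^2(\Omega))\cap L^2(0,T;H^1(\Omega))$ plus the $\alpha$-weighted boundary bound, and this already suffices for the $L^{4/3}(0,T;V_0^{-1})$ bound on $\partial_t\phi_{\alpha,f_\alpha}$ (estimates \eqref{ns6.5}--\eqref{ns6.6}, which exploit the uniform $L^\infty(0,T;H^1(\Omega))$ bound on $u_{\alpha,f_\alpha}$), for Aubin--Lions, and for passing to the limit in the weak formulation against compactly supported divergence-free test functions. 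Your "expected obstacle" is therefore self-inflicted: carrying out a uniform $W^{1,6}$/$\mathbb{P}\Delta$ estimate for the adjoint boundary condition $[2\mu D(\phi)\nu+(\alpha+b\cdot\nu)\phi]_\tau=0$ would require an additional elliptic result not established in the paper, and nothing in the statement of \cref{P02} requires it. If you repair \eqref{ns1.8} and \eqref{ns1.8bis} as above and drop the second-order adjoint estimate, your argument aligns with the paper's.
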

\begin{proof}
From \cref{P01}, we already know that the sequence $(u_{\alpha,f_\alpha})_\alpha$ is bounded in $L^2(0,T;H^1(\Omega))\cap L^\infty(0,T;L^2(\Omega))$
and that \eqref{ns1.8} holds.
Let us consider $\varphi\in L^4(0,T;V^1_0)$. Then
\begin{equation}\label{ns4.2}
\langle \partial_t u_\alpha,\varphi\rangle
= -\int_0^T \int_\Omega [(u_\alpha\cdot \nabla) u_\alpha]\cdot \varphi \ dx \ dt- \int_0^T \int_\Omega 2\mu D(u_\alpha): D(\varphi) \ dx \ dt+ \int_0^T \int_\omega f_\alpha \cdot \varphi \ dx \ dt.
\end{equation}
We have
\begin{equation}\label{ns4.3}
\left| \int_0^T \int_\Omega 2\mu D(u_\alpha): D(\varphi) \ dx \ dt \right| \leq C\|u_\alpha\|_{L^2(0,T;H^1(\Omega))} \|\varphi\|_{L^4(0,T;V^1_0)}
\end{equation}
and
\begin{multline}\label{ns4.4}
\left| \int_0^T \int_\Omega [(u_\alpha\cdot \nabla) u_\alpha]\cdot \varphi \ dx \ dt \right| \leq \int_0^T \|u_\alpha\|_{L^3(\Omega)} \|\nabla u_\alpha\|_{L^2(\Omega)} \|\varphi\|_{L^6(\Omega)}\ dt
\\
\leq C\int_0^T \|u_\alpha\|_{L^2(\Omega)}^{1/2}  \|u_\alpha\|_{H^1(\Omega)}^{3/2} \|\varphi\|_{H^1(\Omega)}\ dt
\\
\leq C\|u_\alpha\|_{L^\infty(0,T;L^2(\Omega))}^{1/2}  \|u_\alpha\|_{L^2(0,T;H^1(\Omega))}^{3/2}  \|\varphi\|_{L^4(0,T;V^1_0)}.
\end{multline}
Gathering \eqref{ns4.2}, \eqref{ns4.3} and \eqref{ns4.4}, we deduce that $(\partial_t u_{\alpha,f_\alpha})_\alpha$ is bounded in $L^{4/3}(0,T;V^{-1}_0)$.
Using the Banach-Alaoglu theorem combined with the Aubin-Lions compactness result (see, for instance, \cite[p. 271]{Temam}), we deduce that, up to a subsequence,
\begin{equation}\label{ns1.6bis}
u_{\alpha,f_\alpha} \rightharpoonup
 U \quad \text{weakly * in}\quad L^2(0,T;H^1(\Omega))\cap L^\infty(0,T;L^2(\Omega)),
\end{equation}
\begin{equation}\label{ns1.7bis}
\partial_t u_{\alpha,f_\alpha} \rightharpoonup
\partial_t U \quad \text{weakly in}\quad L^{4/3}(0,T;V_0^{-1}),
\end{equation}
\begin{equation}\label{ns1.9bis}
u_{\alpha,f_\alpha} \to
 U \quad \text{strongly in}\quad L^2(0,T;L^2(\Omega)).
\end{equation}

Now, let us consider $\varphi\in C^{\infty}_c([0,T)\times \Omega)$, $\div \varphi=0$. Multiplying the first equation of \eqref{ns1.0} by $\varphi$ and integrating by parts, we deduce that
\begin{multline*}
-\int_0^T\int_{\Omega} \partial_t \varphi \cdot u_\alpha \ dx\ dt-\int_0^T\int_{\Omega} [(u_\alpha\cdot \nabla)\varphi] \cdot u_\alpha \ dx\ dt+\int_0^T\int_{\Omega} 2\mu D(\varphi): D(u_\alpha) \ dx\ dt
\\
=\int_0^T\int_{\omega} f_\alpha \cdot \varphi \ dx\ dt+\int_\Omega \varphi(0,\cdot)\cdot a\ dx.
\end{multline*}

Using \eqref{ns1.6bis}, \eqref{ns1.7bis}, \eqref{ns1.9bis}, we deduce that $U\in L^2(0,T;H^1(\Omega))\cap L^\infty(0,T;L^2(\Omega))$ satisfies
\begin{multline*}
-\int_0^T\int_{\Omega} \partial_t \varphi \cdot U \ dx\ dt-\int_0^T\int_{\Omega} [(U\cdot \nabla)\varphi] \cdot U \ dx\ dt+\int_0^T\int_{\Omega} 2\mu D(\varphi): D(U) \ dx\ dt
\\
=\int_0^T\int_{\omega} f \cdot \varphi \ dx\ dt+\int_\Omega \varphi(0,\cdot)\cdot a\ dx,
\end{multline*}
with
$$
\div U=0, \quad U=b \quad \text{on} \quad (0,T)\times \partial \Omega.
$$
It means that $U$ is a weak solution of \eqref{ns0.0}. Using the weak-strong uniqueness (see \cite[pp. 298-299]{Temam}), we deduce that $U=u_f$.

The proof for the adjoint systems is similar: first we multiply the first equation of \eqref{ns3.3} by $\phi_{\alpha,f_\alpha}$:
\begin{multline}\label{ns6.1}
- \frac{1}{2} \frac{d}{dt} \int_{\Omega} |\phi_{\alpha,f_\alpha}|^2 \ dx 
+\int_{\Omega} (\nabla u_{\alpha,f_\alpha})^\top \phi_{\alpha,f_\alpha} \cdot \phi_{\alpha,f_\alpha}\ dx 
-\int_{\partial \Omega} (u_{\alpha,f_\alpha}\cdot \nu) \frac{|\phi_{\alpha,f_\alpha}|^2}{2}\ d\gamma
\\
+2\mu \int_{\Omega} |D(\phi_{\alpha,f_\alpha})|^2 \ dx 
+ \int_{\partial \Omega}  (\alpha +b\cdot \nu )\left|[\phi_{\alpha,f_\alpha}]_{\tau} \right|^2\ d\gamma
= 
\int_{\Omega} (u_{\alpha,f}-z_d)\cdot \phi_{\alpha,f_\alpha}\ dx.
\end{multline}
Then, integrating the above relation in $(t,T)$, we find
\begin{multline}\label{ns6.2}
 \frac{1}{2} \int_{\Omega} |\phi_{\alpha,f_\alpha}(t,\cdot)|^2 \ dx 
+2\mu \int_t^T \int_{\Omega} |D(\phi_{\alpha,f_\alpha})|^2 \ dx \ ds
+\int_t^T \int_{\Omega} (\nabla u_{\alpha,f_\alpha})^\top \phi_{\alpha,f_\alpha} \cdot \phi_{\alpha,f_\alpha}\ dx \ ds
\\
+\int_t^T \int_{\partial \Omega} (\alpha + \frac{1}{2} b\cdot \nu) |\phi_{\alpha,f_\alpha}|^2 \ d\gamma
= 
\int_t^T \int_{\Omega} (u_{\alpha,f_\alpha}-z_d)\cdot \phi_{\alpha,f_\alpha}\ dx\ ds.
\end{multline}
Using \eqref{1556}, H\"older's inequality, the Sobolev embedding $H^1(\Omega)\subset L^6(\Omega)$ and the Korn inequality, we deduce
\begin{multline}\label{ns6.3}
 \frac{1}{2} \int_{\Omega} |\phi_{\alpha,f_\alpha}(t,\cdot)|^2 \ dx 
+\mu \int_t^T \int_{\Omega} |D(\phi_{\alpha,f_\alpha})|^2 \ dx \ ds
+\frac{\alpha}{2} \int_t^T \int_{\partial \Omega} |\phi_{\alpha,f_\alpha}|^2 \ d\gamma
\\
\leq 
\frac{1}{2} \int_t^T \int_{\Omega} |u_{\alpha,f_\alpha}-z_d|^2\ dx\ ds
+C\int_t^T  (1+\|\nabla u_{\alpha,f_\alpha}\|_{L^2(\Omega)}^4)  \|\phi_{\alpha,f_\alpha}\|_{L^2(\Omega)}^2\ ds.
\end{multline}
Since $\left(u_{\alpha,f_\alpha}\right)_\alpha$ is bounded in $L^\infty(0,T;H^1(\Omega))$, we deduce from Gr\"onwall's lemma that
$\left(\phi_{\alpha,f_\alpha}\right)_\alpha$ is bounded in $L^\infty(0,T;L^2(\Omega))\cap L^2(0,T;H^1(\Omega))$ and that
$\left(\sqrt{\alpha} \phi_{\alpha,f_\alpha}\right)_\alpha$ is bounded in $L^2(0,T;L^2(\partial\Omega))$. 
Then we multiply the first equation of \eqref{ns3.3} by $\varphi\in L^4(0,T;V^1_0)$:
\begin{multline}\label{ns6.4}
\langle \partial_t \phi_{\alpha,f_\alpha},\varphi\rangle =
\int_0^T\int_{\Omega} (\nabla u_{\alpha,f_\alpha})^\top \phi_{\alpha,f_\alpha} \cdot \varphi\ dx \ dt
-\int_0^T\int_{\Omega}(u_{\alpha,f_\alpha}\cdot \nabla) \phi_{\alpha,f_\alpha}\cdot \varphi\ dx \ dt
\\
+\int_0^T\int_{\Omega} 2\mu D(\phi_{\alpha,f_\alpha}): D(\varphi) \ dx \ dt
-\int_0^T\int_{\Omega}(u_{\alpha,f_\alpha}-z_d) \cdot \varphi\ dx \ dt.
\end{multline}
We have
\begin{multline}\label{ns6.5}
\left|\int_0^T\int_{\Omega} (\nabla u_{\alpha,f_\alpha})^\top \phi_{\alpha,f_\alpha} \cdot \varphi\ dx \ dt\right|
\leq C \int_0^T \|u_{\alpha,f_\alpha}\|_{H^1(\Omega)} \| \phi_{\alpha,f_\alpha} \|_{L^2(\Omega)}^{1/2} \| \phi_{\alpha,f_\alpha} \|_{H^1(\Omega)}^{1/2} 
\|\varphi\|_{H^1(\Omega)} \ dt
\\
\leq C\|u_{\alpha,f_\alpha}\|_{L^2(0,T;H^1(\Omega))} \| \phi_{\alpha,f_\alpha} \|_{L^\infty(0,T;L^2(\Omega))}^{1/2}
\| \phi_{\alpha,f_\alpha} \|_{L^2(0,T;H^1(\Omega))}^{1/2} 
\|\varphi\|_{L^4(0,T;H^1(\Omega))}
\\
\leq C\|\varphi\|_{L^4(0,T;H^1(\Omega))},
\end{multline}
and
\begin{multline}\label{ns6.6}
\left| \int_0^T\int_{\Omega}(u_{\alpha,f_\alpha}\cdot \nabla) \phi_{\alpha,f_\alpha}\cdot \varphi\ dx \ dt \right|
\leq C \int_0^T \|u_{\alpha,f_\alpha}\|_{L^2(\Omega)}^{1/2}\|u_{\alpha,f_\alpha}\|_{H^1(\Omega)}^{1/2} \| \phi_{\alpha,f_\alpha} \|_{H^1(\Omega)}
\|\varphi\|_{H^1(\Omega)} \ dt
\\
\leq C\|u_{\alpha,f_\alpha}\|_{L^\infty(0,T;L^2(\Omega))}^{1/2} \|u_{\alpha,f_\alpha}\|_{L^2(0,T;H^1(\Omega))}^{1/2} 
\| \phi_{\alpha,f_\alpha} \|_{L^2(0,T;H^1(\Omega))}
\|\varphi\|_{L^4(0,T;H^1(\Omega))}
\\
\leq C\|\varphi\|_{L^4(0,T;H^1(\Omega))}.
\end{multline}
Combining  \eqref{ns6.4}, \eqref{ns6.5},  \eqref{ns6.6} with standard estimates, we deduce that the sequence
$\left(\partial_t \phi_{\alpha,f_\alpha}\right)_\alpha$ is bounded in $L^{4/3}(0,T;V_0^{-1})$.
Using the Banach-Alaoglu theorem and the Aubin-Lions compactness result (see, for instance, \cite[p. 271]{Temam}), we deduce that, up to a subsequence,
\begin{equation}\label{ns7.0}
\phi_{\alpha,f_\alpha} \rightharpoonup
 \Phi \quad \text{weakly * in}\quad L^2(0,T;H^1(\Omega))\cap L^\infty(0,T;L^2(\Omega)),
\end{equation}
\begin{equation}\label{ns7.1}
\partial_t \phi_{\alpha,f_\alpha} \rightharpoonup
\partial_t \Phi \quad \text{weakly in}\quad L^{4/3}(0,T;V_0^{-1}),
\end{equation}
\begin{equation}\label{ns7.2}
\phi_{\alpha,f_\alpha} \to
 \Phi \quad \text{strongly in}\quad L^2(0,T;L^2(\Omega)).
\end{equation}

Now, let us consider $\varphi\in C^{\infty}_c((0,T]\times \Omega)$, $\div \varphi=0$. Multiplying the first equation of \eqref{ns3.3} by $\varphi$ and integrating by parts, we deduce that
\begin{multline*}
 \int_0^T\int_{\Omega} \phi_{\alpha,f_\alpha} \cdot\partial_t \varphi \ dx \ dt
+\int_0^T\int_{\Omega} (\nabla u_{\alpha,f_\alpha})^\top \phi_{\alpha,f_\alpha} \cdot \varphi\ dx \ dt
-\int_0^T\int_{\Omega}(u_{\alpha,f_\alpha}\cdot \nabla) \phi_{\alpha,f_\alpha}\cdot \varphi\ dx \ dt
\\
+\int_0^T\int_{\Omega} 2\mu D(\phi_{\alpha,f_\alpha}): D(\varphi) \ dx \ dt
=\int_0^T\int_{\Omega}(u_{\alpha,f_\alpha}-z_d) \cdot \varphi\ dx \ dt.
\end{multline*}

Using \eqref{ns7.0}, \eqref{ns7.2}, \eqref{ns1.6bis}, \eqref{ns1.9bis}, we deduce that $\Phi\in L^2(0,T;H^1(\Omega))\cap L^\infty(0,T;L^2(\Omega))$ satisfies
\begin{multline*}
 \int_0^T\int_{\Omega} \Phi \cdot\partial_t \varphi \ dx \ dt
+\int_0^T\int_{\Omega} (\nabla u_{f})^\top \Phi \cdot \varphi\ dx \ dt
-\int_0^T\int_{\Omega}(u_{f}\cdot \nabla) \Phi\cdot \varphi\ dx \ dt
\\
+\int_0^T\int_{\Omega} 2\mu D(\Phi): D(\varphi) \ dx \ dt
=\int_0^T\int_{\Omega}(u_{f}-z_d) \cdot \varphi\ dx \ dt,
\end{multline*}
with
$$
\div \Phi=0, \quad \Phi=0 \quad \text{on} \quad (0,T)\times \partial \Omega.
$$
It means that $\Phi$ is a weak solution of \eqref{ns3.2}. 
Using the weak-strong uniqueness of the linear system  \eqref{ns3.2}, we deduce that $\Phi=\phi_f$.
\end{proof}

\section{Existence for the optimal control problems}\label{sec_exi}
This section is devoted to the following classical result, showing the existence of an optimal control and giving a first order necessary condition in terms of the adjoint states:
\begin{Theorem}\label{T01}
The problems \eqref{ns0.1} and \eqref{nh1.1} admit at least a solution. Moreover, if $\overline{f}\in \mathcal{U}$ is a solution to \eqref{ns0.1}, then
\begin{equation}\label{ns3.5}
\int_0^T\int_{\omega} (g-\overline{f})\cdot \phi_{\overline{f}} \ dx \ dt+M\int_0^T\int_{\omega} (g-\overline{f})\cdot \overline{f} \ dx \ dt\geq 0 \quad \forall g\in \mathcal{U}.
\end{equation}
Similarly, if $\overline{f_\alpha}\in \mathcal{U}$ is a solution to \eqref{nh1.1}, then
\begin{equation}\label{ns3.6}
\int_0^T\int_{\omega} (g-\overline{f_\alpha})\cdot \phi_{\alpha,\overline{f_\alpha}} \ dx \ dt+M\int_0^T\int_{\omega} (g-\overline{f_\alpha})\cdot \overline{f_\alpha} \ dx \ dt\geq 0 \quad \forall g\in \mathcal{U}.
\end{equation}
\end{Theorem}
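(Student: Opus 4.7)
The set $\mathcal{U}$ defined in \eqref{ns1.5} is a closed ball in the Hilbert space $L^2(0,T;L^2(\omega))$, hence bounded, convex and weakly sequentially compact. Given a minimizing sequence $(f_n)\subset\mathcal{U}$ for $J$, I would extract a weakly convergent subsequence $f_n\rightharpoonup\overline{f}\in\mathcal{U}$. The uniform-in-$f\in\mathcal{U}$ estimates of \cref{P01} (applied to \eqref{ns0.0}, equivalently to the formal limit $\alpha=\infty$) together with the interpolation/nonlinear estimates already used in the proof of \cref{P02} yield bounds on $(u_{f_n})$ in $L^\infty(0,T;H^1(\Omega))\cap L^2(0,T;W^{1,6}(\Omega))$ and on $(\partial_t u_{f_n})$ in $L^{4/3}(0,T;V_0^{-1})$. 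The Aubin-Lions lemma then gives strong convergence $u_{f_n}\to u_{\overline{f}}$ in $L^2(0,T;L^2(\Omega))$, the limit being identified by weak-strong uniqueness exactly as in the proof of \cref{P02}. The tracking term in $J$ passes to the limit by strong convergence while $\tfrac{M}{2}\|f\|^2$ is weakly lower semicontinuous, so $J(\overline{f})\leq \liminf J(f_n)$ and $\overline{f}$ is optimal. The same argument, with $\alpha$ fixed, proves existence for \eqref{nh1.1}.

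\textbf{First-order condition.} Let $\overline{f}\in\mathcal{U}$ be optimal for \eqref{ns0.1} and $g\in\mathcal{U}$. By convexity, $f_\lambda:=\overline{f}+\lambda(g-\overline{f})\in\mathcal{U}$ for $\lambda\in[0,1]$, so $J(f_\lambda)\geq J(\overline{f})$ and
$$\frac{d}{d\lambda}\bigg|_{\lambda=0^+} J(f_\lambda)\geq 0.$$
The next step is to show that the control-to-state map $f\mapsto u_f$ is G\^ateaux differentiable at $\overline{f}$ in the strong-solution class \eqref{ns7.0ter}, with directional derivative $u'$ solving the linearized Navier-Stokes system
$$\partial_t u' + (u_{\overline{f}}\cdot\nabla)u' + (u'\cdot\nabla)u_{\overline{f}} - \div\sigma(u',q')= (g-\overline{f})1_\omega,$$
supplemented with $\div u'=0$, $u'(0,\cdot)=0$, and $u'=0$ on $(0,T)\times\partial\Omega$. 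This follows from an implicit function theorem applied to the nonlinear map from the space \eqref{ns7.0ter} to its natural image space at the point $(\overline{f},u_{\overline{f}})$, the linearization being an isomorphism by standard maximal regularity. Then
$$\frac{d}{d\lambda}\bigg|_{\lambda=0^+}J(f_\lambda)=\int_0^T\!\!\int_\Omega (u_{\overline{f}}-z_d)\cdot u'\,dx\,dt+M\int_0^T\!\!\int_\omega \overline{f}\cdot(g-\overline{f})\,dx\,dt.$$

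\textbf{Duality argument.} To eliminate $u'$, multiply the adjoint equation \eqref{ns3.2} by $u'$ and the linearized equation by $\phi_{\overline{f}}$, subtract and integrate by parts on $(0,T)\times\Omega$. The terminal condition $\phi_{\overline{f}}(T,\cdot)=0$ and the initial condition $u'(0,\cdot)=0$ kill the time-boundary terms, the Dirichlet conditions on both $u'$ and $\phi_{\overline{f}}$ kill the space-boundary terms, and the transport contributions match because of the transpose Jacobian $(\nabla u_{\overline{f}})^\top$ in \eqref{ns3.2}. The identity that remains is
$$\int_0^T\!\!\int_\Omega (u_{\overline{f}}-z_d)\cdot u'\,dx\,dt=\int_0^T\!\!\int_\omega \phi_{\overline{f}}\cdot(g-\overline{f})\,dx\,dt,$$
from which \eqref{ns3.5} follows. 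For the Navier slip case, the very same computation with $u'$ now satisfying the homogeneous Navier conditions $u'\cdot\nu=0$, $[2\mu D(u')\nu+\alpha u']_\tau=0$ yields \eqref{ns3.6}, provided that the spurious boundary term $\int_{\partial\Omega}(u_{\alpha,\overline{f_\alpha}}\cdot\nu)\phi\cdot u'\,d\gamma=\int_{\partial\Omega}(b\cdot\nu)\phi_\tau\cdot u'_\tau\,d\gamma$ is compensated. This is exactly the role of the extra term $(b\cdot\nu)\phi$ in the boundary condition of \eqref{ns3.3}, and confirms that the stated adjoint system is the correct one.

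\textbf{Main obstacle.} The only nontrivial step is the G\^ateaux differentiability of the control-to-state map in the strong class \eqref{ns7.0ter}. For the Dirichlet problem this is classical; for the Navier slip problem it requires solving the linearized system with the mixed boundary conditions arising from \eqref{ns1.0}, for which the maximal-regularity result (cf.\ Theorem 2.2 in \cite{AACG}) used in the proof of \cref{P01} provides the needed isomorphism property. Once this is in hand, the two assertions of the theorem follow from the direct method and the duality manipulation outlined above.
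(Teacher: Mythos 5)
Your proposal is correct and follows essentially the same route as the paper: the direct method (weak compactness of $\mathcal{U}$, a priori bounds on the states, Aubin--Lions, weak--strong uniqueness, weak lower semicontinuity of the quadratic control term) for existence, and G\^ateaux differentiability of the control-to-state map combined with the adjoint duality identity for the first-order conditions \eqref{ns3.5}--\eqref{ns3.6}. The only difference is presentational: you spell out the differentiability via an implicit-function/maximal-regularity argument and track the cancellation of the $(b\cdot\nu)$ boundary term explicitly, both of which the paper leaves as "we can check".
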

\begin{proof}
The proof is quite standard (see for instance \cite{Casas}) and we only sketch the proof in the case of the Navier boundary conditions for the sake of completeness. 
First to show the existence of a solution of \eqref{nh1.1} , we consider a minimizing sequence $(f_{\alpha,k})\in \mathcal{U}$,
$$
J_\alpha(f_{\alpha,k})\to \inf_{\mathcal{U}} J_\alpha.,
$$
as $k\to \infty$.
Using the definition \eqref{nh1.2} of $J_\alpha$, we deduce from the above limit that $(f_{\alpha,k})_k$ is a bounded sequence of $L^2(0,T;L^2(\omega))$. Thus
$$
f_{\alpha,k} \rightharpoonup
 f_{\alpha} \quad \text{weakly in}\quad L^2(0,T;L^2(\omega)).
$$
Then following the same steps of the proof of \cref{P02}, we can show that 
\begin{equation}\label{ns4.6}
u_{\alpha,f_{\alpha,k}} \rightharpoonup
 U_{\alpha} \quad \text{weakly * in}\quad L^2(0,T;H^1(\Omega))\cap L^\infty(0,T;L^2(\Omega)),
\end{equation}
\begin{equation}\label{ns4.7}
\partial_t u_{\alpha,f_{\alpha,k}} \rightharpoonup
\partial_t U_{\alpha} \quad \text{weakly in}\quad L^{4/3}(0,T;V_0^{-1}),
\end{equation}
\begin{equation}\label{ns4.9}
u_{\alpha,f_{\alpha,k}} \to
 U_{\alpha} \quad \text{strongly in}\quad L^2(0,T;L^2(\Omega)),
\end{equation}
and passing to the limit in the weak formulation of \eqref{ns1.0}, we deduce that $U_{\alpha}$ is a weak solution of \eqref{ns1.0} associated with $f_{\alpha}$. 
Using the weak-strong uniqueness (see \cite[pp. 298-299]{Temam}), we deduce that $U_{\alpha}=u_{\alpha,f_{\alpha}}$.
Moreover,
$$
J_{\alpha}(f_{\alpha}) \leq \liminf_k J_{\alpha}(f_{\alpha,k})=\inf_{\mathcal{U}} J_\alpha.
$$
To obtain the first order optimality condition, we use the Gateaux-differentiability of $J_\alpha$ and of the state $\Lambda : f\mapsto (u_{\alpha,f},p_{\alpha,f})$.
More precisely, by denoting by
$$
d \Lambda_{f}(g):=(v,q)
$$
the derivative of $\Lambda$ in $f$ and in the direction $g$, we can check that 
\begin{equation}\label{ns5.0}
\left\{
\begin{array}{rl}
\partial_t v +(u_{\alpha,f}\cdot \nabla) v+(v\cdot \nabla) u_{\alpha,f}- \div \sigma(v,q)= g 1_{\omega} & \text{in} \ (0,T)\times \Omega,\\
\div v = 0 & \text{in} \ (0,T)\times \Omega,\\
v\cdot \nu=0 & \text{on} \ (0,T)\times \partial \Omega,\\
\left[2\mu D(v)\nu+\alpha v\right]_\tau = 0 & \text{on} \ (0,T)\times \partial \Omega,\\
v(0,\cdot)=0 & \text{in} \ \Omega.
\end{array}
\right.
\end{equation}
Then we have from \eqref{nh1.2}
\begin{equation}\label{ns5.1}
(d J_\alpha)_{f}(g)= \int_0^T\int_{\Omega} (u_{\alpha,f}-z_d)\cdot v \ dx\ dt+M \int_0^T\int_\omega f\cdot g \ dx\ dt.
\end{equation}
Multiplying the first equation of \eqref{ns3.3} by $v$:
\begin{multline*}
\int_0^T\int_{\Omega} (u_{\alpha,f}-z_d)\cdot v \ dx\ dt
=
\int_0^T\int_{\Omega} -\partial_t \phi_{\alpha,f} \cdot v \ dx\ dt
+
\int_0^T\int_{\Omega} (\nabla u_{\alpha,f})^\top \phi_{\alpha,f} \cdot v \ dx\ dt
\\
-
\int_0^T\int_{\Omega} (u_{\alpha,f}\cdot \nabla) \phi_{\alpha,f}\cdot v \ dx\ dt
- 
\int_0^T\int_{\Omega} \div \sigma(\phi_{\alpha,f},\pi_{\alpha,f})\cdot v \ dx\ dt
\\
=
\int_0^T\int_{\omega} \phi_{\alpha,f} \cdot g\ dx\ dt
+\int_0^T\int_{\partial \Omega} -(u_{\alpha,f}\cdot \nu) \phi_{\alpha,f}\cdot v
+\sigma(v,q)\nu\cdot \phi_{\alpha,f}
-\sigma(\phi_{\alpha,f},\pi_{\alpha,f})\nu \cdot v
\ d\gamma\ dt
\\
=
\int_0^T\int_{\omega} \phi_{\alpha,f} \cdot g\ dx\ dt.
\end{multline*}

Now, since $\mathcal{U}$ is convex set, if $\overline{f_\alpha}$ is a solution of \eqref{nh1.1},
we have
$$
(d J_\alpha)_{\overline{f_\alpha}}(g-\overline{f_\alpha}) \geq 0 \quad \forall g\in \mathcal{U}.
$$
With the above computation, this writes \eqref{ns3.6}.
\end{proof}

\begin{Remark}
Note that conditions \eqref{ns3.5} and \eqref{ns3.6} can be written as
$$
\overline{f}=\mathcal{P}_{\mathcal{U}}\left(-\frac{1}{M} \phi_{\overline{f}}\right)
\quad 
\text{and}
\quad
\overline{f_\alpha}=\mathcal{P}_{\mathcal{U}}\left(-\frac{1}{M} \phi_{\alpha,\overline{f_\alpha}}\right)
$$
where $\mathcal{P}_{\mathcal{U}}$ is the projection on the convex set $\mathcal{U}$.
\end{Remark}

\section{Proof of the main result}\label{sec_main}
We are now in a position to prove \cref{Thmain}:
\begin{proof}[Proof of \cref{Thmain}]
Let us consider a family of optimal controls $\overline{f_\alpha}$ of problem \eqref{nh1.1}. For simplicity, in this proof we use the notation $f_\alpha$ instead of $\overline{f_\alpha}$.
First we note that 
\begin{equation}\label{ns3.7}
\frac{M}{2} \int_0^T\int_\omega |f_\alpha|^2 \ dx\ dt \leq J_\alpha({f_\alpha}) \leq J_\alpha(\widehat{f})
=\frac 12 \int_0^T\int_{\Omega} |u_{\alpha,\widehat{f}}-z_d|^2 \ dx\ dt+
\frac{M}{2} \int_0^T\int_\omega |\widehat{f}|^2 \ dx\ dt.
\end{equation}
Applying \cref{P02}, we deduce that $\left(u_{\alpha,\widehat{f}}\right)_{\alpha}$ is bounded in $L^2(0,T;L^2(\Omega))$ and thus from \eqref{ns3.7} we obtain that
$(f_{\alpha})_{\alpha}$ is bounded in $L^2(0,T;L^2(\omega))$.
Consequently, there exists $f\in L^2(0,T;L^2(\omega))$ up to a subsequence,
\begin{equation}\label{ns3.8}
f_\alpha \rightharpoonup
 f \quad \text{weakly in}\quad L^2(0,T;L^2(\omega)).
\end{equation}

Using that $\mathcal{U}$ is convex and closed in $L^2(0,T;L^2(\omega))$ 
(see \eqref{ns1.5}), we deduce that it is also closed for the weak topology and thus $f\in \mathcal{U}$.

We can thus apply \cref{P02} and we obtain relations \eqref{ns1.6}--\eqref{ns1.8bis}.
In particular, from \eqref{ns1.9}, we deduce 
\begin{equation}\label{ns6.0}
\frac 12 \int_0^T\int_{\Omega} |u_{\alpha,f_\alpha}-z_d|^2 \ dx\ dt\to \frac 12 \int_0^T\int_{\Omega} |u_f-z_d|^2 \ dx\ dt
\end{equation}
and \eqref{ns3.8} implies
$$
\int_0^T\int_\omega |f|^2 \ dx\ dt \leq \liminf_{\alpha\to\infty} \int_0^T\int_\omega |f_\alpha|^2 \ dx\ dt.
$$
Combining the two last relations, we obtain
\begin{equation}\label{ns3.9}
J(f)\leq \liminf_{\alpha\to\infty} J_\alpha(f_\alpha).
\end{equation}
On the other hand, by definition of $f_\alpha$, we have
$$
J_\alpha(f_\alpha) \leq J_\alpha(f)
$$
and applying again \cref{P02}, we deduce that
$$
u_{\alpha,f} \to
 u_f \quad \text{strongly in}\quad L^2(0,T;L^2(\Omega)),
$$
so that
$$
\limsup_{\alpha\to\infty} J_\alpha(f_\alpha)\leq \limsup_{\alpha\to\infty} J_\alpha(f) = J(f).
$$
The above relation and \eqref{ns3.9} yields that
\begin{equation}\label{ns4.0}
\lim_{\alpha\to\infty} J_\alpha(f_\alpha) =J(f).
\end{equation}

Moreover since
$$
J_\alpha(f_\alpha)\leq J_\alpha(g) \quad \forall g\in \mathcal{U}
$$
and since, by using again \cref{P02}
$$
\lim_{\alpha\to\infty} J_\alpha(g) = J(g),
$$
we deduce that $f$ is a solution to \eqref{ns0.1}.

From \eqref{ns6.0} and \eqref{ns4.0}, we deduce that
$$
\int_0^T\int_\omega |f_\alpha|^2 \ dx\ dt \to \int_0^T\int_\omega |f|^2 \ dx\ dt.
$$
as $\alpha\to\infty$ and thus we obtain \eqref{ns5.2} from \eqref{ns3.8}.
\end{proof}

\section{The bidimensional case}\label{sec2d}
In that case, we can work with weak solutions (that are unique) for systems \eqref{ns0.0}, \eqref{ns1.0}. More precisely,
we assume (instead of \eqref{cc1} and \eqref{cc2})
\begin{equation}\label{cc3}
a\in L^2(\Omega), \quad \div a=0, \quad a\cdot \nu=b(0,\cdot)\cdot \nu \quad \text{on} \ \partial \Omega
\end{equation}
and
$$
b\in H^1(0,T;L^2(\Omega))\cap C^0([0,T);H^1(\Omega))\cap L^2(0,T;H^2(\Omega)), \quad \div b =0 \quad \text{in} \ (0,T)\times \Omega.
$$
In particular,
\begin{equation}\label{cc4}
b\in L^2(0,T;H^{3/2}(\partial \Omega))\cap H^{1/4}(0,T;L^2(\partial \Omega)), \quad
\int_{\partial \Omega} b(t,\cdot)\cdot \nu \ d\gamma=0 \quad (t\in [0,T]).
\end{equation}

The weak solutions of \eqref{ns0.0} satisfies 
$$
u_f\in L^2(0,T;H^1(\Omega))\cap C([0,T];L^2(\Omega)) \cap H^{1}(0,T;V^{-1}_0),
$$
$$
\left\{
\begin{array}{rl}
\div u_f = 0 & \text{in} \ (0,T)\times \Omega,\\
u_f=b & \text{on} \ (0,T)\times \partial \Omega,\\
\end{array}
\right.
$$
and 
\begin{multline}\label{1900}
-\int_0^T\int_{\Omega} \partial_t \varphi \cdot u_f \ dx\ dt-\int_0^T\int_{\Omega} [(u_f\cdot \nabla)\varphi] \cdot u_f \ dx\ dt+\int_0^T\int_{\Omega} 2\mu D(\varphi): D(u_f) \ dx\ dt
\\
=\int_0^T\int_{\omega} f \cdot \varphi \ dx\ dt+\int_\Omega \varphi(0,\cdot)\cdot a\ dx,
\end{multline}
for any $\varphi\in C^1_c([0,T);V^1_0)$. We recall that $V^1_0$ is defined by \eqref{1448}.

The definition of weak solutions for \eqref{ns1.0} is similar:
$$
u_{\alpha,f}\in L^2(0,T;H^1(\Omega))\cap C([0,T];L^2(\Omega)) \cap H^{1}(0,T;V^{-1}_\nu),
$$
$$
\left\{
\begin{array}{rl}
\div u_{\alpha,f} = 0 & \text{in} \ (0,T)\times \Omega,\\
u_{\alpha,f}\cdot \nu=b\cdot \nu& \text{on} \ (0,T)\times \partial \Omega,\\
\end{array}
\right.
$$
and 
\begin{multline}\label{1858}
-\int_0^T\int_{\Omega} \partial_t \varphi \cdot u_{\alpha,f} \ dx\ dt-\int_0^T\int_{\Omega} [(u_{\alpha,f}\cdot \nabla)\varphi] \cdot u_{\alpha,f} \ dx\ dt
+\int_0^T\int_{\Omega} 2\mu D(\varphi): D(u_{\alpha,f}) \ dx\ dt
\\
+\int_0^T\int_{\partial \Omega}  \left[(b\cdot \nu) u_\tau +\alpha (u-b)_\tau\right] \cdot \varphi_\tau \ d\gamma \ dt
=\int_0^T\int_{\omega} f_\alpha \cdot \varphi \ dx\ dt+\int_\Omega \varphi(0,\cdot)\cdot a\ dx,
\end{multline}
for any $\varphi\in C^1_c([0,T);V^1_\nu)$ where
\begin{equation}\label{1448-2d}
V_\nu^1:=\left\{u\in H^1(\Omega) \ ; \ \div u=0, \quad u\cdot \nu=0 \ \text{on} \ \partial \Omega\right\},
\end{equation}
$$
V_\nu^{-1}:=(V_\nu^1)'.
$$

With this framework, the hypotheses on $\mathcal{U}$ are weaker than in the 3d case: we only assume that
\begin{equation}\label{ns1.5-2d}
\mathcal{U} \ \text{is a closed convex non empty subset of} \ L^2(0,T;L^2(\omega))
\end{equation}
instead of \eqref{ns1.5}.

With these assumptions, \cref{T01} holds true with the same proof. The main result becomes
\begin{Theorem}\label{Thmain-2d}
Assume that $(a,b)$ and $\mathcal{U}$ satisfy the above hypotheses.

Then for any $\alpha$ large enough, the problem \eqref{nh1.1} admits a solution $\overline{f_\alpha}$ and
there exist $\overline{f}\in \mathcal{U}$ and 
a sequence such that as $\alpha \to \infty$
\begin{equation}\label{ns5.2-2d}
\overline{f_\alpha} \to \overline{f} \quad \text{strongly in} \quad L^2(0,T;L^2(\omega)),
\end{equation}
and $\overline{f}$ is a solution of \eqref{ns0.1}.
Moreover, the corresponding solutions $(u_{\overline{f}},p_{\overline{f}})$ and $(u_{\alpha,\overline{f_\alpha}},p_{\alpha,\overline{f_\alpha}})$
of \eqref{ns0.0} and \eqref{ns1.0}
satisfy
\begin{equation}\label{ns5.3-2d}
u_{\alpha,\overline{f_\alpha}} \rightharpoonup
 u_{\overline{f}} \quad \text{weakly * in}\quad L^2(0,T;H^1(\Omega))\cap L^\infty(0,T;L^2(\Omega)),
\end{equation}
\begin{equation}\label{ns5.4-2d}
\partial_t u_{\alpha,\overline{f_\alpha}} \rightharpoonup
\partial_t u_{\overline{f}} \quad \text{weakly in}\quad L^{2}(0,T;V_0^{-1}),
\end{equation}
\begin{equation}\label{ns5.5-2d}
u_{\alpha,\overline{f_\alpha}} \to
 u_{\overline{f}} \quad \text{strongly in}\quad L^2(0,T;L^2(\Omega)),
\end{equation}
\begin{equation}\label{ns5.6-2d}
\sqrt{\alpha} (u_{\alpha,\overline{f_\alpha}}-b) \to
 0 \quad \text{strongly in}\quad L^2(0,T;L^2(\partial \Omega)).
\end{equation}
The solutions of the adjoint systems (defined by \eqref{ns3.2} and \eqref{ns3.3}) $(\phi_{f},\pi_{f})$ and $(\phi_{\alpha,f},\pi_{\alpha,f})$
satisfy
\begin{equation}\label{ns5.7-2d}
\phi_{\alpha,\overline{f_\alpha}} \rightharpoonup
 \phi_{\overline{f}} \quad \text{weakly * in}\quad L^2(0,T;H^1(\Omega))\cap L^\infty(0,T;L^2(\Omega)),
\end{equation}
\begin{equation}\label{ns5.8-2d}
\partial_t \phi_{\alpha,\overline{f_\alpha}} \rightharpoonup
\partial_t \phi_{\overline{f}} \quad \text{weakly in}\quad L^{4/3}(0,T;V_0^{-1}),
\end{equation}
\begin{equation}\label{ns5.9-2d}
\phi_{\alpha,\overline{f_\alpha}} \to
 \phi_{\overline{f}} \quad \text{strongly in}\quad L^2(0,T;L^2(\Omega)),
\end{equation}
\begin{equation}\label{ns5.6bis-2d}
\sqrt{\alpha} \phi_{\alpha,\overline{f_\alpha}} \to
 0 \quad \text{strongly in}\quad L^2(0,T;L^2(\partial \Omega)).
\end{equation}
\end{Theorem}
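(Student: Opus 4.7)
The plan is to follow the same three-step strategy used for \cref{Thmain} in the 3D case, namely: (i) establish the $\alpha\to\infty$ convergence of direct and adjoint states (the 2D analogue of \cref{P02}); (ii) establish existence of optimal controls and the first-order optimality condition (the 2D analogue of \cref{T01}); (iii) combine these ingredients via a classical minimization argument to pass to the limit in the controls. The main adaptation is that in 2D we work throughout with weak solutions, which are unique and globally defined for data satisfying \eqref{cc3}--\eqref{cc4}, so the smallness restriction on $(a,b,\widehat{f})$ disappears and $\mathcal{U}$ may be any closed convex set as in \eqref{ns1.5-2d}.

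For step (i), I would start from the weak formulation \eqref{1858}, take $\varphi = u_{\alpha,f_\alpha}-b$ (suitably lifted and solenoidal) as a test function, and derive the usual energy identity: this yields, uniformly in $\alpha$, bounds on $u_{\alpha,f_\alpha}$ in $L^\infty(0,T;L^2(\Omega))\cap L^2(0,T;H^1(\Omega))$ together with the boundary bound $\sqrt{\alpha}\,(u_{\alpha,f_\alpha}-b)_\tau$ in $L^2(0,T;L^2(\partial\Omega))$, thanks to hypothesis \eqref{1556} that absorbs the term $\tfrac{1}{2}(b\cdot\nu)|u_\tau|^2$. The 2D Sobolev embedding $H^1(\Omega)\hookrightarrow L^4(\Omega)$ then gives $(u_{\alpha,f_\alpha}\cdot\nabla) u_{\alpha,f_\alpha}$ bounded in $L^2(0,T;V_0^{-1})$ (an improvement over the 3D case), hence $\partial_t u_{\alpha,f_\alpha}$ bounded in $L^2(0,T;V_0^{-1})$. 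Banach--Alaoglu and Aubin--Lions provide, up to a subsequence, a limit $U$ with the convergences \eqref{ns5.3-2d}--\eqref{ns5.6-2d}; the extra $\alpha$ in the boundary term forces $U = b$ in the tangential trace, and passing to the limit in \eqref{1858} against test fields $\varphi\in C^1_c([0,T);V^1_0)$ (which kill the boundary integral) shows that $U$ is a weak solution of \eqref{ns0.0}. Uniqueness of 2D weak solutions then identifies $U=u_f$. The treatment of $\phi_{\alpha,f_\alpha}$ is entirely analogous; because of the $(\nabla u_{\alpha,f_\alpha})^\top \phi_{\alpha,f_\alpha}$ and $(u_{\alpha,f_\alpha}\cdot\nabla)\phi_{\alpha,f_\alpha}$ terms estimated as in \eqref{ns6.5}--\eqref{ns6.6}, the natural space for $\partial_t\phi_{\alpha,f_\alpha}$ is still $L^{4/3}(0,T;V_0^{-1})$, and uniqueness of the \emph{linear} adjoint identifies the limit with $\phi_f$.

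For step (ii), the existence argument of \cref{T01} carries over verbatim: minimizing sequences are bounded in $L^2(0,T;L^2(\omega))$ from the definition of $J_\alpha$, and the convergences proved in step (i) together with weak lower semicontinuity of the $L^2$ norm give $J_\alpha(f_\alpha)\le \liminf_k J_\alpha(f_{\alpha,k})$. The derivation of \eqref{ns3.5}--\eqref{ns3.6} uses the linearized system \eqref{ns5.0} and integration by parts against the adjoint, and is unchanged since the boundary contributions still cancel (the Navier conditions on $v$ and $\phi_{\alpha,f}$ match).

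For step (iii), choose any fixed $g_0\in\mathcal{U}$ (non-empty by assumption). By optimality,
\begin{equation*}
\frac{M}{2}\|\overline{f_\alpha}\|_{L^2(0,T;L^2(\omega))}^2
\le J_\alpha(\overline{f_\alpha}) \le J_\alpha(g_0)
= \tfrac{1}{2}\|u_{\alpha,g_0}-z_d\|_{L^2}^2+\tfrac{M}{2}\|g_0\|_{L^2}^2,
\end{equation*}
and step (i) applied with $f_\alpha \equiv g_0$ shows the right-hand side is uniformly bounded in $\alpha$. Extract a subsequence with $\overline{f_\alpha}\rightharpoonup \overline{f}$ weakly in $L^2(0,T;L^2(\omega))$; since $\mathcal{U}$ is convex and closed it is weakly closed, so $\overline{f}\in\mathcal{U}$. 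Applying step (i) along this subsequence yields \eqref{ns5.3-2d}--\eqref{ns5.6-2d} and \eqref{ns5.7-2d}--\eqref{ns5.6bis-2d}. From the strong $L^2$ convergence of $u_{\alpha,\overline{f_\alpha}}$ to $u_{\overline{f}}$ and weak lower semicontinuity of the control norm, $J(\overline{f})\le \liminf_{\alpha\to\infty} J_\alpha(\overline{f_\alpha})$. Conversely, for every $g\in\mathcal{U}$, $J_\alpha(\overline{f_\alpha})\le J_\alpha(g)\to J(g)$ by step (i), which at $g=\overline{f}$ gives $\limsup J_\alpha(\overline{f_\alpha})\le J(\overline{f})$ and more generally $J(\overline{f})\le J(g)$, so $\overline{f}$ solves \eqref{ns0.1}. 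Combining both inequalities yields $J_\alpha(\overline{f_\alpha})\to J(\overline{f})$ and then, together with the strong state convergence, $\|\overline{f_\alpha}\|_{L^2}\to \|\overline{f}\|_{L^2}$; combined with weak convergence this gives the strong convergence \eqref{ns5.2-2d} in a Hilbert space. The main obstacle is really step (i): controlling the nonlinear convective term uniformly in $\alpha$ with only weak-solution regularity, and passing to the limit in the boundary term $\alpha (u_{\alpha,f_\alpha}-b)_\tau\cdot \varphi_\tau$ without losing information — this is handled by restricting test functions to $V^1_0$ so that the boundary integral disappears, while the bound on $\sqrt{\alpha}\,(u_{\alpha,f_\alpha}-b)_\tau$ ensures the tangential trace of the limit matches $b$.
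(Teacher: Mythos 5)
Your proposal is correct and follows essentially the same route as the paper: the authors also reduce \cref{Thmain-2d} to the argument of \cref{Thmain}, replacing \cref{P02} by its 2D weak-solution analogue (\cref{P10}), which is proved exactly as you describe — energy estimate on $w=u_{\alpha,f_\alpha}-b$ using \eqref{1556} and the Ladyzhenskaya-type bound $\|w\|_{L^4(\Omega)}^2\leq C\|w\|_{L^2(\Omega)}\|w\|_{H^1(\Omega)}$, the resulting $L^2(0,T;V_0^{-1})$ bound on $\partial_t u_{\alpha,f_\alpha}$, Aubin--Lions compactness, identification of the limit via uniqueness of 2D weak solutions, and the unchanged treatment of the adjoint and of the minimization step.
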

The proof of \cref{Thmain-2d} is the same as the proof of \cref{Thmain}, we only use the following result instead of \cref{P02}:
\begin{Proposition}\label{P10}
Assume \eqref{1556}
and that $f,f_\alpha\in \mathcal{U}$ with
\begin{equation}\label{1736}
f_\alpha \rightharpoonup
 f \quad \text{weakly in}\quad L^2(0,T;L^2(\omega)).
\end{equation}
Then, the (weak) solutions 
$(u_f,p_f)$, $(u_{\alpha,f_\alpha},p_{\alpha,f_\alpha})$, $(\phi_{f},\pi_{f})$ and $(\phi_{\alpha,f},\pi_{\alpha,f})$
of respectively \eqref{ns0.0}, \eqref{ns1.0},  \eqref{ns3.2}, \eqref{ns3.3}, 
satisfy \eqref{ns1.6}--\eqref{ns1.8bis} and 
\begin{equation}\label{ns1.7-2d}
\partial_t u_{\alpha,f_\alpha} \rightharpoonup
\partial_t u_f \quad \text{weakly in}\quad L^{2}(0,T;V_0^{-1}).
\end{equation}
\end{Proposition}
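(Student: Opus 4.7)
The plan is to follow exactly the blueprint of the proof of \cref{P02}, with three adjustments specific to the bidimensional weak-solution framework: the a~priori estimates must be obtained directly from the weak formulations \eqref{1900} and \eqref{1858} (\cref{P01} no longer applies), the 2D Ladyzhenskaya inequality $\|u\|_{L^4(\Omega)}\le C\|u\|_{L^2(\Omega)}^{1/2}\|u\|_{H^1(\Omega)}^{1/2}$ improves the bound on $\partial_t u_{\alpha,f_\alpha}$ from $L^{4/3}(V_0^{-1})$ to $L^2(V_0^{-1})$, and the identification of the limit rests on the uniqueness of 2D weak Navier-Stokes solutions (\cite[pp.~298-299]{Temam}).

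First, since $b$ is extended to $\Omega$ as a divergence-free field with the regularity listed just below \eqref{cc3}, the difference $w_\alpha:=u_{\alpha,f_\alpha}-b$ belongs to $L^2(0,T;V_\nu^1)$ and is admissible (after the standard time regularisation) as a test function in \eqref{1858}. Using \eqref{1556} to absorb the indefinite boundary term $\int(b\cdot\nu)u_\tau\cdot w_\tau$ by the positive $\alpha|w|^2$ contribution, Korn's inequality and Gr\"onwall's lemma produce a bound, uniform in $\alpha$ and in $f_\alpha$ (thanks to \eqref{1736}), on
\[
\|u_{\alpha,f_\alpha}\|_{L^\infty(0,T;L^2(\Omega))\cap L^2(0,T;H^1(\Omega))}+\sqrt{\alpha}\,\|u_{\alpha,f_\alpha}-b\|_{L^2(0,T;L^2(\partial\Omega))}.
\]
Testing \eqref{1858} against $\varphi\in L^2(0,T;V_0^1)$ and controlling the nonlinearity by
\[
\Bigl|\int_0^T\!\!\int_\Omega(u_{\alpha,f_\alpha}\cdot\nabla)\varphi\cdot u_{\alpha,f_\alpha}\,dx\,dt\Bigr|\le C\|u_{\alpha,f_\alpha}\|_{L^\infty(L^2)}\|u_{\alpha,f_\alpha}\|_{L^2(H^1)}\|\varphi\|_{L^2(H^1)}
\]
then delivers \eqref{ns1.7-2d}. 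Banach-Alaoglu and Aubin-Lions extract, along a subsequence, the convergences \eqref{ns1.6}, \eqref{ns1.9} and \eqref{ns1.8}.

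Passing to the limit in \eqref{1858} against $\varphi\in C^1_c([0,T);V_0^1)\subset C^1_c([0,T);V_\nu^1)$ is then direct: both boundary integrals vanish because $\varphi_\tau=0$ on $\partial\Omega$, the nonlinearity passes thanks to the strong $L^2(L^2)$ convergence of $u_{\alpha,f_\alpha}$, and the trace theorem combined with the $\sqrt{\alpha}$-bound forces the boundary trace of the limit to equal $b$. Weak-strong uniqueness of 2D weak Navier-Stokes solutions then identifies the limit as $u_f$ and lifts subsequence convergence to convergence of the whole family.

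The adjoint analysis runs along the same lines, and I expect the only genuine obstacle to be the energy estimate for $\phi_{\alpha,f_\alpha}$: the 3D argument leading to \eqref{ns6.3} uses the bound on $u_{\alpha,f_\alpha}$ in $L^\infty(H^1)$, which is unavailable in the 2D weak setting. The fix is to estimate the cubic term via 2D Ladyzhenskaya as
\[
\Bigl|\int_\Omega(\nabla u_{\alpha,f_\alpha})^\top\phi_{\alpha,f_\alpha}\cdot\phi_{\alpha,f_\alpha}\,dx\Bigr|\le C\|\nabla u_{\alpha,f_\alpha}\|_{L^2}\|\phi_{\alpha,f_\alpha}\|_{L^2}\|\phi_{\alpha,f_\alpha}\|_{H^1},
\]
and to absorb $\|\phi_{\alpha,f_\alpha}\|_{H^1}$ into $\mu\|D(\phi_{\alpha,f_\alpha})\|_{L^2}^2$ via Young's inequality, leaving a prefactor $\|\nabla u_{\alpha,f_\alpha}\|_{L^2}^2\in L^1(0,T)$ in front of $\|\phi_{\alpha,f_\alpha}\|_{L^2}^2$, which is still admissible for Gr\"onwall. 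This produces the uniform bounds on $\phi_{\alpha,f_\alpha}$ in $L^\infty(L^2)\cap L^2(H^1)$ and on $\sqrt{\alpha}\,\phi_{\alpha,f_\alpha}$ in $L^2(L^2(\partial\Omega))$. The remaining steps---the $L^{4/3}(V_0^{-1})$ bound on $\partial_t\phi_{\alpha,f_\alpha}$, Aubin-Lions compactness, passage to the limit in the linear adjoint equation using the strong $L^2(L^2)$ convergence of $u_{\alpha,f_\alpha}$ to handle both $(\nabla u_{\alpha,f_\alpha})^\top\phi_{\alpha,f_\alpha}$ and $(u_{\alpha,f_\alpha}\cdot\nabla)\phi_{\alpha,f_\alpha}$, and finally uniqueness for the linear adjoint system---follow the pattern of \cref{P02} verbatim and yield \eqref{ns3.0}--\eqref{ns1.8bis}.
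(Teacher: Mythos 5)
Your proposal is correct and follows essentially the same route as the paper: energy estimate on $w=u_{\alpha,f_\alpha}-b$ using \eqref{1556}, Korn and Gr\"onwall; the 2D Ladyzhenskaya inequality to upgrade the $\partial_t u_{\alpha,f_\alpha}$ bound to $L^2(0,T;V_0^{-1})$; Banach--Alaoglu and Aubin--Lions; and identification of the limit via uniqueness of 2D weak solutions (the paper invokes uniqueness of weak solutions rather than weak--strong uniqueness, but in 2D this is the same conclusion). Your only genuine departure is in the adjoint estimate, where the paper simply says the proof is ``the same as in the 3d case'' even though the 3D argument leading to \eqref{ns6.3} uses the $L^\infty(0,T;H^1(\Omega))$ bound on $u_{\alpha,f_\alpha}$ that is unavailable for 2D weak solutions; your replacement, bounding the cubic term by $C\|\nabla u_{\alpha,f_\alpha}\|_{L^2}\|\phi_{\alpha,f_\alpha}\|_{L^2}\|\phi_{\alpha,f_\alpha}\|_{H^1}$ and absorbing via Young so that Gr\"onwall only needs $\|\nabla u_{\alpha,f_\alpha}\|_{L^2}^2\in L^1(0,T)$, is correct and actually makes explicit a step the paper leaves implicit.
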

\begin{proof}
From \eqref{1736}, we deduce that $\left(f_\alpha\right)_\alpha$ is bounded in $L^2(0,T;L^2(\omega))$.
We set  
$$
w:=u_{\alpha,f_\alpha}-b, \quad g:=f_\alpha 1_\omega-\partial_t b -(b\cdot\nabla)b+\Delta b\in L^2(0,T;L^2(\Omega))
$$
that satisfy
\begin{equation}\label{ns2.0-2d}
\left\{
\begin{array}{rl}
\partial_t w +(w\cdot \nabla) w
+(b\cdot \nabla) w
+(w\cdot \nabla) b
- \div \sigma(w,p)
= g & \text{in} \ (0,T)\times \Omega,\\
\div w = 0 & \text{in} \ (0,T)\times \Omega,\\
w\cdot \nu=0 & \text{on} \ (0,T)\times \partial \Omega,\\
\left[2\mu D(w)\nu+\alpha w\right]_\tau = 0  & \text{on} \ (0,T)\times \partial \Omega,\\
w(0,\cdot)=a-b(0,\cdot) & \text{in} \ \Omega.
\end{array}
\right.
\end{equation}
To obtain our estimates, we multiply the first equation of \eqref{ns2.0-2d} by $w$ and we deduce
\begin{multline}\label{ns4.1-2d}
\frac{1}{2}\frac{d}{dt} \int_{\Omega} |w|^2 \ dx + \int_{\partial \Omega} \frac{b\cdot \nu}{2} w_\tau^2 \ d\gamma
+\int_{\Omega} [(w\cdot\nabla)b]\cdot w \ dx + 2\mu \int_{\Omega} |D(w)|^2 \ dx +\alpha \int_{\partial \Omega} w_\tau^2 \ d\gamma
\\
=\int_{\Omega} g\cdot w \ dx.
\end{multline}

Using H\"older's inequality, the Sobolev embedding $H^{1/2}(\Omega)\subset L^4(\Omega)$ and the Korn inequality, we deduce
\begin{multline}\label{1558-2d}
\left| \int_{\Omega} [(w\cdot\nabla)b]\cdot w \ dx\right|
\leq \|w\|_{L^4(\Omega)}^2 \|\nabla b\|_{L^2(\Omega)} 
\leq C \|w\|_{L^2(\Omega)} \|w\|_{H^1(\Omega)} \|b\|_{H^1(\Omega)}
\\
\leq \mu \|D(w)\|_{L^2(\Omega)}^2+
C(1+\|b\|_{H^1(\Omega)}^2) \|w\|_{L^2(\Omega)}^2.
\end{multline}

Using \eqref{1556},
we deduce from \eqref{ns4.1-2d} and from \eqref{1558-2d} that
\begin{multline}\label{ns1.2-2d}
\int_{\Omega} |w(t,\cdot)|^2 \ dx 
+ 2\mu \int_0^t \int_{\Omega} |D(w)|^2 \ dx \ ds
+\alpha \int_0^t \int_{\partial \Omega} w_\tau^2 \ d\gamma \ ds
\\
\leq 
\|a-b(0,\cdot)\|_{L^2(\Omega)}^2
+C\|g\|_{L^2(0,T;L^2(\Omega))}^2 
+C\int_0^t (1+\|b\|_{H^1(\Omega)}^2) \|w\|_{L^2(\Omega)}^2 \ ds,
\end{multline}
where the constants $C$ are independent of $\alpha$.

Using the Gr\"onwall lemma and the Korn lemma, we deduce 
\begin{multline}
\|w\|_{L^\infty(0,T;L^2(\Omega))}^2
+\|w\|_{L^2(0,T;H^1(\Omega))}^2
+\alpha \|w_\tau\|_{L^2(0,T;L^2(\partial \Omega))}^2
\\
\leq 
\left(
\|a-b(0,\cdot)\|_{L^2(\Omega)}^2
+C\|g\|_{L^2(0,T;L^2(\Omega))}^2 
\right)
\exp\left(C(T+\|b\|_{L^2(0,T;H^1(\Omega))}^2)\right),
\end{multline}
where the constants $C$ are independent of $\alpha$.

We deduce that the sequence $(u_{\alpha,f_\alpha})_\alpha$ is bounded in $L^2(0,T;H^1(\Omega))\cap L^\infty(0,T;L^2(\Omega))$
and that \eqref{ns1.8} holds.
Let us consider $\varphi\in L^2(0,T;V^1_0)$. Then
\begin{equation}\label{ns4.2-2d}
\langle \partial_t u_\alpha,\varphi\rangle
= \int_0^T \int_\Omega [(u_\alpha\cdot \nabla) \varphi]\cdot u_\alpha \ dx \ dt- \int_0^T \int_\Omega 2\mu D(u_\alpha): D(\varphi) \ dx \ dt+ \int_0^T \int_\omega f_\alpha \cdot \varphi \ dx \ dt.
\end{equation}
We have
\begin{equation}\label{ns4.3-2d}
\left| \int_0^T \int_\Omega 2\mu D(u_\alpha): D(\varphi) \ dx \ dt \right| \leq C\|u_\alpha\|_{L^2(0,T;H^1(\Omega))} \|\varphi\|_{L^4(0,T;V^1_0)}
\end{equation}
and
\begin{multline}\label{ns4.4-2d}
\left| \int_0^T \int_\Omega [(u_\alpha\cdot \nabla) \varphi]\cdot u_\alpha \ dx \ dt \right| 
\leq \int_0^T \|u_\alpha\|_{L^4(\Omega)}^2 \|\nabla \varphi\|_{L^2(\Omega)}\ dt
\leq C\int_0^T \|u_\alpha\|_{L^2(\Omega)} \|u_\alpha\|_{H^1(\Omega)} \|\varphi\|_{H^1(\Omega)}\ dt
\\
\leq C\|u_\alpha\|_{L^\infty(0,T;L^2(\Omega))}  \|u_\alpha\|_{L^2(0,T;H^1(\Omega))}  \|\varphi\|_{L^2(0,T;V^1_0)}.
\end{multline}
Gathering \eqref{ns4.2-2d}, \eqref{ns4.3-2d} and \eqref{ns4.4-2d}, we deduce that $(\partial_t u_{\alpha,f_\alpha})_\alpha$ is bounded in $L^{2}(0,T;V^{-1}_0)$.
Using the Banach-Alaoglu theorem combined with the Aubin-Lions compactness result (see, for instance, \cite[p. 271]{Temam}), we deduce that, up to a subsequence,
\begin{equation}\label{ns1.6bis-2d}
u_{\alpha,f_\alpha} \rightharpoonup
 U \quad \text{weakly * in}\quad L^2(0,T;H^1(\Omega))\cap L^\infty(0,T;L^2(\Omega)),
\end{equation}
\begin{equation}\label{ns1.7bis-2d}
\partial_t u_{\alpha,f_\alpha} \rightharpoonup
\partial_t U \quad \text{weakly in}\quad L^{2}(0,T;V_0^{-1}),
\end{equation}
\begin{equation}\label{ns1.9bis-2d}
u_{\alpha,f_\alpha} \to
 U \quad \text{strongly in}\quad L^2(0,T;L^2(\Omega)).
\end{equation}

Now, let us take $\varphi\in C^{\infty}_c([0,T)\times \Omega)$, $\div \varphi=0$
in \eqref{1858}.
Using \eqref{ns1.6bis-2d}, \eqref{ns1.7bis-2d}, \eqref{ns1.9bis-2d}, we deduce that $U\in L^2(0,T;H^1(\Omega))\cap L^\infty(0,T;L^2(\Omega))$ satisfies
\eqref{1900} with
$$
\div U=0, \quad U=b \quad \text{on} \quad (0,T)\times \partial \Omega.
$$
It means that $U$ is a weak solution of \eqref{ns0.0}. Using the uniqueness of weak solutions (see \cite[p. 294]{Temam}), we deduce that $U=u_f$.

The proof for the adjoint systems is the same as in the 3d case. 
\end{proof}

\begin{Remark}
From the Sobolev embeddings in the 2d case, we can improve the convergence \eqref{ns3.1} and obtain 
\begin{equation}\label{ns3.1-2dbis}
\partial_t \phi_{\alpha,f_\alpha} \rightharpoonup
\partial_t \phi_f \quad \text{weakly in}\quad L^{2-\varepsilon}(0,T;V_0^{-1}),
\end{equation}
for any $\varepsilon>0$.
The ``worst'' term to estimate is
\begin{multline}\label{ns6.5-2d}
\left|\int_0^T\int_{\Omega} (\nabla u_{\alpha,f_\alpha})^\top \phi_{\alpha,f_\alpha} \cdot \varphi\ dx \ dt\right|
\leq C \int_0^T \|u_{\alpha,f_\alpha}\|_{H^1(\Omega)} \| \phi_{\alpha,f_\alpha} \|_{L^2(\Omega)}^{(2-2\varepsilon)/(2-\varepsilon)} \| \phi_{\alpha,f_\alpha} \|_{H^1(\Omega)}^{\varepsilon/(2-\varepsilon)} 
\|\varphi\|_{H^1(\Omega)} \ dt
\\
\leq C\|u_{\alpha,f_\alpha}\|_{L^2(0,T;H^1(\Omega))} \| \phi_{\alpha,f_\alpha} \|_{L^\infty(0,T;L^2(\Omega))}^{(2-2\varepsilon)/(2-\varepsilon)}
\| \phi_{\alpha,f_\alpha} \|_{L^2(0,T;H^1(\Omega))}^{\varepsilon/(2-\varepsilon)} 
\|\varphi\|_{L^{(2-\varepsilon)/(1-\varepsilon)}(0,T;H^1(\Omega))}
\\
\leq C\|\varphi\|_{L^{(2-\varepsilon)/(1-\varepsilon)}(0,T;H^1(\Omega))}.
\end{multline}
\end{Remark}

\bibliographystyle{plain}
\bibliography{reference}

\end{document}